\documentclass[12pt]{amsart}
\usepackage{amsmath}
\usepackage{amssymb}
\usepackage{latexsym}
\usepackage{amscd}
\usepackage{citesort}
\usepackage{graphicx} %We can use any other package if necessary
\usepackage{amsthm}
\usepackage{mathrsfs}
\usepackage{xypic}
\usepackage{bm}

\newdimen\AAdi%
\newbox\AAbo%
% %ou cmex10
%
\def\AAk#1#2{\s_etbox\AAbo=\hbox{#2}\AAdi=\wd\AAbo\kern#1\AAdi{}}%
\def\AAr#1#2#3{\s_etbox\AAbo=\hbox{#2}\AAdi=\ht\AAbo\raise#1\AAdi\hbox{#3}}%
%%%%%%%%%%%%%%%%%%%%%%%%%%%%%%%%%%%%%%%%%%%%%%%%%%%%%%%%%%%%%%%%%%%%%%%%%%%%%%
\font\tenmsb=msbm10 at 12pt \font\sevenmsb=msbm7 at 8pt
\font\fivemsb=msbm5 at 6pt
\newfam\msbfam
\textfont\msbfam=\tenmsb \scriptfont\msbfam=\sevenmsb
\scriptscriptfont\msbfam=\fivemsb
\def\Bbb#1{{\tenmsb\fam\msbfam#1}}
%%%%%%%%%%%%%%%%%%%%%%%%%%%%%%%%%%%%%%%%%%%%%%%%%%%%%%%%%%%%%%%%%%%%%%%%%%%%%%
\textwidth 15cm \textheight 22cm \topmargin 0cm \oddsidemargin 0.5cm
\evensidemargin 0.5cm
\parindent = 5 mm
\hfuzz     = 6 pt
\parskip   = 3 mm

\newtheorem{thm}{Theorem}[section]

\newtheorem{cor}{Corollary}[section]
\newtheorem{rem}{Remark}[section]
\newtheorem{pro}{Proposition}[section]

\newcommand{\ba}{\begin{array}}
\newcommand{\ea}{\end{array}}

\newcommand{\Section}[2]{\setcounter{equation}{0}
\allowdisplaybreaks
\section[#1]{#2}}

\def\n{\nabla}

\def\ir#1{\mathbb R^{#1}}

\def\f#1#2{\frac{#1}{#2}}

\def\mc#1{\mathcal{#1}}

\def\a{\alpha}
\def\be{\beta}

\def\p#1{\partial #1}

\def\de{\delta}
\def\De{\Delta}

\def\ep{\varepsilon}

\def\g{\gamma}

\def\la{\lambda}
\def\La{\Lambda}
\def\om{\omega}
\def\Om{\Omega}
\def\th{\theta}
\def\Th{\Theta}

\def\w{\wedge}

\def\Hess{\mbox{Hess}}
\def\R{\Bbb{R}}

\def\lan{\langle}
\def\ran{\rangle}
\def\ra{\rightarrow}

\def\aint#1{-\hskip -4.5mm\int_{#1}}
\def\V{\mbox{Vol}}
\def\ol{\overline}

\subjclass{58E20,53A10.}

\begin{document}
\title
[Curvature estimates for minimal hypersurfaces] {Curvature estimates
for minimal hypersurfaces via generalized longitude function}
\author
[Ling Yang]{Ling Yang}
\address{School of Mathematical Sciences, Fudan University,
Shanghai 200433, China.} \email{yanglingfd@fudan.edu.cn}
\thanks{The author is partially supported by NSFC. He is grateful to the Max Planck
Institute for Mathematics in the Sciences in Leipzig for its
hospitality and  continuous support. }
\date{}

\begin{abstract}

On some specified convex supporting sets of spheres, we find a generalized longitude
function $\th$ whose level sets are totally geodesic. Given an
arbitrary (weakly) harmonic map $u$ into
spheres, the composition of $\th$ and $u$ satisfies an elliptic
equation of divergence type. With the aid of corresponding Harnack
inequality, we establish image shrinking property and then the
regularity results are followed. Applying such results to study the
Gauss image of minimal hypersurfaces in Euclidean spaces,
we obtain curvature estimates and corresponding Bernstein theorems.

\end{abstract}

\maketitle

\Section{Introduction}{Introduction}

The convexity plays an important role in regularity theory of harmonic maps. It is well-known that an open hemisphere is
the maximal geodesic ball in Euclidean sphere, and Hildebrandt-Kaul-Widman \cite{h-k-w} proved the regularity of harmonic
maps whose image is contained in a compact subset of an open hemisphere. One of the key points of the paper is the fact
that the composition of (weakly) harmonic map and a convex function on the target manifold gives a (weakly) subharmonic
function. One can then exploits the maximal principle for partial differential equations of elliptic type or in more refined
schemes, Moser's Harnack inequality.

Also using Moser's Harnack inequality, Moser \cite{m} obtained weakly Bernstein theorem for minimal hypersurfaces, which says
that an entire minimal graph $M=\{(x,f(x)):x\in \R^n\}$ has to be affine linear provided that the slope of the function $f$
is uniformly bounded. In the viewpoint of Gauss maps, the assumption on $|Df|$ equals to say that the Gauss image of $M$
is contained in a closed subset of an open hemisphere.

In \cite{jxy}, it is shown that one can do substantially better. More precisely, a weakly harmonic map into $S^n$
has to be regular whenever its image is contained in a compact subset of the complement of half of a equator (i.e. an upper
hemisphere of codimension 1), which contains the upper hemisphere and the lower hemisphere. In conjunction with Ruh-Vilms
theorem \cite{r-v}, one can prove the following Bernstein type theorem: Let $M$ be a complete imbedded minimal hypersurface
in $\R^{n+1}$, satisfying so-called DVP-condition, if the Gauss image of $M$ omits a neighborhood of half of a equator,
then $M$ has to be affine linear. It improves Moser's theorem. $\Bbb{V}:=S^n\backslash \ol{S}_+^{n-1}$ is a maximal \textit{convex
supporting set}, i.e. every compact set $K\subset \Bbb{V}$ submits a strictly convex function (see \cite{G}),
and $\Bbb{V}$ is maximal because as soon we enlarge $\Bbb{V}$, it will contain a closed geodesic. To make analytic
technologies applicable, one have to construct a smooth family of strictly convex functions on $K\subset \Bbb{V}$,
not just a single convex function. The construction is quite subtle. Based on these functions, one can use the Green
function test technique \cite{h-k-w}\cite{h-j-w}, telescoping trick of \cite{g-g}\cite{g-h} and image shrinking
method employing in \cite{h-k-w}\cite{h-j-w}\cite{g-j} to derive regularity theorem and Bernstein type result. In
the process, Moser's Harnack inequality \cite{m}\cite{b-g} and a-prior estimates for Green functions
\cite{g-w}\cite{bm} play a fundamental role.

It is natural for us to raise the following 2 questions.

Firstly, is $S^n\backslash \ol{S}_+^{n-1}$ the unique maximal convex supporting
set in $S^n$? If not, given a convex supporting set $\Bbb{V}\subset S^n$ which
is not contained in $S^n\backslash \ol{S}_+^{n-1}$, can we derive regularity results
(or Bernstein type results) when the image under harmonic map (or Gauss map, respectively) is contained
in a compact subset of $\Bbb{V}$?

Moser's theorem has been improved by Ecker-Huisken \cite{e-h}, which says that any entire minimal
graph has to be affine linear whenever $|Df|=o(\sqrt{|x|^2+f^2})$. In other words,
one can obtain Bernstein type theorem for minimal hypersurface $M$ whose Gauss image lies in an open
hemisphere; the image of $y\in M$ under Gauss map is allowed to tend to the equator (the boundary of hemisphere) when
$y$ diverges to the infinity in a controlled manner. Similarly, under the fundamental assumption
that $\g(M)\subset S^n\backslash \ol{S}_+^{n-1}$, where $\g$ denotes the Gauss map, can we derive
Bernstein type results by imposing an additional condition on the rate of convergence of $\g(y)$
to the boundary of $S^n\backslash \ol{S}_+^{n-1}$ as $y\ra \infty$? It is our second question.

We partially answer above 2 questions in the present paper. But our technique is a bit different from \cite{jxy}.
The function $\th$ on spheres, so-called \textit{generalized longitude function}, play a crucial role in our statement.

Let $\pi$ be the natural projection from $\R^{n+1}$ onto $\R^2$, then $\pi$ maps $S^n$ onto $\overline{\Bbb{D}}$, the 2-dimensional closed
unit disk. It is easily-seen that the preimage of $(0,0)$ under $\pi$ is the subsphere of codimension 2,
which is denoted by $S^{n-2}$. Given $\Bbb{V}\subset S^n$, once $\Bbb{V}$ is a simply-connected subset of $S^n\backslash S^{n-2}$,
the composition of $\pi$ and angular coordinate of $\overline{\Bbb{D}}\backslash \{(0,0)\}$ yields
a real-valued function, denoted by $\th$. When $n=2$, $\th$ becomes the longitude function,
so $\th$ is called \textit{generalized longitude function}. Each level set of $\th$ is contained in a hemisphere of codimension 1, which is
totally geodesic. It is not hard for us to calculate $\Hess\ \th$ and moreover, prove $\Bbb{V}$ is a convex-supporting set whenever
$\Bbb{V}$ is a simply-connected subset of $S^n\backslash S^{n-2}$ (see Proposition \ref{p2}). It means
$S^n\backslash \ol{S}_{+}^{n-1}$ is not the unique maximal convex supporting set and the first question is partially answered. But
it is still unknown what is the sufficient and necessary condition ensuring $\Bbb{V}$ be a convex supporting set of $S^n$.

Using composition formula, we can deduce the partial differential equation (\ref{la}) that $\th\circ u$ satisfies whenever $u$ is a (weakly) harmonic
map into $\Bbb{V}\subset S^n$. We note that (\ref{la}) can also be derived in the framework of warped product structure,
see \cite{so}. Following the idea of Moser \cite{m} and \cite{b-g}, one can derive Harnack's inequalities for $\th\circ u$
when $M$ satisfies so-called \textit{local DSVP-condition} with respect to a fixed point $y_0\in M$. Here 'D' represents the existence
of a distance function $d$, the metric topology induced by which coincides with the initial topology; 'V' denotes the condition
on the volume growth of metric balls centered at $y_0$ as a function of their radius; 'S' and 'P' are respectively Sobolev type inequalities
and Neumann-Poincar\'{e} inequalies for functions defined on metric balls centered at $y_0$ with uniform constants. It is easy
to show local DSVP-condition is weaker than DVP-condition in \cite{jxy}. Harnack's inequalities implies image shrinking property,
and it follows a regularity theorem of weakly harmonic maps into sphere with image restrictions (see Theorem \ref{t2}), which not
only generalize but also improve the regularity theorem in \cite{jxy}.

Finally, in conjunction with image shrinking property and Ecker-Huisken's curvature estimates \cite{e-h} for minimal graphs, we deduce
curvature estimates for minimal hypersurfacs with Gauss image restrictions, which implies a Bernstein type theorem as follows.

\begin{thm}\label{be}
Let $M^m\subset \R^{m+1}$ be an imbedded complete minimal hypersurface with Euclidean volume growth. There is $y_0\in M$, such that
the following Neumann-Poincar\'{e} inequality
$$\int_{B_R(y_0)}|v-\bar{v}_R|^2*1\leq CR^2\int_{B_R(y_0)}|\n v|^2*1\qquad \forall v\in C^\infty(B_R(y_0))$$
holds with a positive constant $C$ not depending on $R$, where $B_R(y_0)$ denotes the extrinsic ball centered at $y_0$ and of radius
$R$ and $\bar{v}_R$ is the average values of $v$ on $B_R(y_0)$. If the Gauss image of $M$ is contained in $S^m\backslash \ol{S}_+^{m-1}$,
and
$$\sup_{B_R(y_0)}d(\cdot,S^{m-2})^{-1}\circ \g=o(\log\log R)$$
then $M$ has to be an affine linear subspace.
\end{thm}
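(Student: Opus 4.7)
The plan is to pass from the Gauss map to $\th\circ\g$ via Ruh--Vilms, run a Moser-type iteration on the elliptic equation (\ref{la}) for $\th\circ\g$ while tracking the degeneration of its coefficients as the Gauss image approaches $S^{m-2}$, and then combine the resulting quantitative image-shrinking with Ecker--Huisken's curvature estimates.

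First I would check that $M$ satisfies the local DSVP-condition at $y_0$: the extrinsic distance on $\R^{m+1}$ provides D, the Euclidean volume growth provides V, the Michael--Simon Sobolev inequality on the minimal hypersurface $M$ furnishes S, and the Neumann--Poincar\'e inequality is precisely assumption P. By Ruh--Vilms the Gauss map $\g:M\to S^m$ is weakly harmonic; since $\g(M)\subset S^m\setminus\ol S_+^{m-1}$ is simply connected and disjoint from $S^{m-2}$, the generalized longitude $\th$ is single-valued along $\g$, with values in $(0,2\pi)$, and $\th\circ\g$ satisfies (\ref{la}) on $M$.

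Coupling (\ref{la}) with local DSVP yields a Moser-type Harnack inequality for $\th\circ\g$ on extrinsic balls, and the standard oscillation argument then produces an image-shrinking inequality
$$
\mathrm{osc}_{B_R(y_0)}\,\th\circ\g \;\leq\; \bigl(1-\de(R)\bigr)\,\mathrm{osc}_{B_{\be R}(y_0)}\,\th\circ\g,
$$
where $\be>1$ is fixed and $\de(R)\in(0,1)$ depends on $\eta(R):=\sup_{B_R(y_0)}d(\cdot,S^{m-2})^{-1}\circ\g$, because the coefficients of (\ref{la}) degenerate along $S^{m-2}$. The core technical point is to pin down this dependence --- one expects $\de(R)\gtrsim\exp(-C\eta(R))$ from Moser iteration --- and then iterate the inequality $\lfloor\log_\be R\rfloor$ times. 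Telescoping, the hypothesis $\eta(R)=o(\log\log R)$ is exactly strong enough to force $\mathrm{osc}_{B_R(y_0)}\,\th\circ\g\to 0$ as $R\to\infty$.

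Once the oscillation decays, the Gauss image of $M\setminus B_R(y_0)$ is eventually confined, for any $\ep>0$, to an $\ep$-neighborhood of a single level set of $\th$; combined with the lower bound $d(\g,S^{m-2})\geq 1/\eta(R)$ from the hypothesis, this traps $\g$ in a closed subset of an open hemisphere of $S^m$ whose distance to the bounding equator is at least of order $1/\eta(R)$. After a suitable rotation, $M$ is thus realized outside a large ball as a minimal graph of slope $|Df|=O(\eta(R))=o(\log\log R)$. Feeding this into Ecker--Huisken's pointwise curvature estimates yields $|A|^2(y)\leq C\,R^{-2}(\log\log R)^{k}\to 0$ at any fixed $y$, so $|A|\equiv 0$; by completeness $M$ is an affine hyperplane. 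The main obstacle is the quantitative third step: precisely tracking how the Harnack constant for (\ref{la}) blows up as $\g$ approaches $S^{m-2}$ and matching that rate against the iteration count $\log_\be R$, which is exactly where the $\log\log$ threshold in the hypothesis comes from.
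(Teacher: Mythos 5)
Your proposal follows the paper's strategy closely and correctly: verify the local DSVP condition at $y_0$ (doubling from Euclidean volume growth via monotonicity, Michael--Simon Sobolev, and the assumed Neumann--Poincar\'e inequality), invoke Ruh--Vilms to make $\g$ harmonic, exploit (\ref{la}) and the Moser--Harnack oscillation inequality (\ref{osc1}) whose decay factor is $1-\exp(-C_0 M(R))$ with $M(R)\sim\eta(R)$, and recognize that $\eta(R)=o(\log\log R)$ is precisely the threshold making the telescoped sum $\sum_j \exp(-C_0 M(2^j R))$ diverge. This is the paper's Sections 3--4, and you identify the right quantitative mechanism.

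The genuine gap is in your final step. You claim that once $\g(M)$ is trapped in an open hemisphere, $M$ is ``realized outside a large ball as a minimal graph of slope $|Df|=O(\eta(R))$'' and then Ecker--Huisken's graph estimates apply. This does not follow. Confinement of $\g(M)$ in the open hemisphere $\{x:(x,x_0)>0\}$ only makes the projection $\pi:M\to x_0^{\perp}$ a local diffeomorphism; the relevant lower bound $(\g(y),x_0)\gtrsim 1/\eta(R)$ degenerates as $R\to\infty$, so there is no uniform expansion and no Hadamard-type argument forcing $\pi$ to be a covering map. Hence $M$ need not be a graph, globally or outside a compact set, and the Ecker--Huisken theorem (stated for entire solutions of the minimal surface equation) cannot be cited as is. The paper avoids this by working intrinsically on $M$: once $|\th\circ\g-\th_\infty|\le\pi/3$ holds globally, set $h=(\g,x_0)^{-1}$ with $x_0=(\cos\th_\infty,\sin\th_\infty,0,\dots,0)$. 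Then (\ref{hess}) and Ruh--Vilms give $\De (\g,x_0)=-|B|^2(\g,x_0)$, hence (\ref{la4}); combining with Simons' identity (\ref{si}) and the Schoen--Simon--Yau Kato inequality (\ref{kato}) yields $\De(|B|^ph^p)\ge 0$, and the mean-value plus Caccioppoli estimates on the minimal hypersurface, (\ref{cur1})--(\ref{cur3}), give $|B|(y)\le C R^{-1}\sup_{B_{R+d(y,y_0)}}h\le 2CR^{-1}\sup_{B_{R+d}}(r^{-1}\circ\g)$, which tends to $0$ as $R\to\infty$ under the $o(\log\log R)$ hypothesis, with no graph structure required. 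You should replace the graph reduction with this intrinsic curvature estimate; the rest of your outline is sound.
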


It is comparable with Theorem 6.5 in \cite{jxy}. Firstly, the previous one requires Neumann-Poincar\'{e} inequality holds true
for every extrinsic balls centered at each point of $M$ and of arbitrary radius with a uniform constant; so the assumption of
Theorem \ref{be} on $M$ is weaker.

Please note that $\p(S^m\backslash \ol{S}_+^{m-1})=S^{m-2}\cup A$ with $A$ the preimage of the interval $(0,1]$ under $\pi$. Given
a sequence $\{y_k:k\in \Bbb{Z}^+\}$ in $M$ tending to $\infty$, $\{\g(y_k)\}$ cannot converge to the boundary
of $S^m\backslash \ol{S}_+^{m-1}$
by the assumption of Theorem 6.5 in \cite{jxy}. In contrast, $\{\g(y_k)\}$ is allowed to converge to an arbitrary
point in $A$ at arbitrary speed, or any point of $S^{m-2}$ in a controlled manner. Hence Theorem \ref{be} partially answers the second
question that we have raised.

If we replace the Gauss image restriction on $M$ by assuming $\g(M)$ is contained in a closed, simply-connected subset
of $S^m\backslash S^{m-2}$, then again based on Gauss image shrinking property one can get the corresponding Bernstein type result,
it is generalization of Theorem 6.5 in \cite{jxy}.

As shown in \cite{m-s}\cite{b-g}, area-minimizing hypersurfaces satisfy local DSVP-condition, hence Theorem \ref{t5}
and Corollary \ref{cor2} are followed from Theorem \ref{be}. Unfortunately we do not know whether the Bernstein type results are optimal.

\bigskip\bigskip

\Section{Generalized longitude functions on spheres}{Generalized longitude functions on spheres}\label{s1}

There is a covering map $\chi:(-\f{\pi}{2},\f{\pi}{2})\times \R\ra S^2\backslash \{N,S\}$,
$$(\varphi,\th)\mapsto (\cos\varphi\cos\th,\cos\varphi\sin\th,\sin\varphi)$$
where $N$ and $S$ are the north pole and the south pole, $\varphi$ and $\th$ are latitude and longitude, respectively.
$\{\varphi,\th\}$ is called the geographic coordinate of $S^2$. Each level set of $\th$ is a meridian, i.e. a half of
great circle connecting the north pole and the south pole. Although $\chi$ is not one-to-one, the restriction
of $\chi$ on $(-\f{\pi}{2},\f{\pi}{2})\times (-\pi,\pi)$ is a bijection onto an open domain $\Bbb{V}$ that is obtained
by deleting the International date line from $S^2$. It has shown in \cite{G}\cite{jxy} that $\Bbb{V}$ is a maximal convex
supporting set of $S^2$. i.e. arbitrary compact set $K\subset \Bbb{V}$ submits a strictly convex function.

The longitude function $\th$ and open domain $\Bbb{V}$ can be generalized to higher dimensional cases.

Let $\pi$ be a natural projection from $\R^{n+1}$ onto $\R^2$, which maps $(x_1,\cdots,x_{n+1})$ to $(x_1,x_2)$, then
it is easily-seen that $\pi(S^n)=\overline{\Bbb{D}}$, the 2-dimensional closed unit disk. On it we shall use the
polar coordinate system; the radial coordinate and the angular coordinate are respectively denoted by $r$ and $\th$. In other words,
there exists a covering mapping $\chi: (0,1]\times \R\ra \overline{\Bbb{D}}\backslash \{(0,0)\}$
$$(r,\th)\mapsto (r\cos\th,r\sin\th).$$

Assume $\Bbb{V}$ is a simply connected subset of $S^n\backslash S^{n-2}=\pi^{-1}\big(\overline{\Bbb{D}}\backslash \{(0,0)\}\big)$,
then lifting theorem in homotopy theory enable us
to find a smooth mapping $\Psi:\Bbb{V}\ra (0,1]\times \R$
$$x\mapsto \Psi(x)=(r(x),\th(x))$$
 such that the following commutative diagram holds
$$\CD
 \Bbb{V} @>\Psi>> (0,1]\times \Bbb{R}  \\
 @V\mathbf{Id}VV     @VV\chi V \\
 \Bbb{V}  @>{\pi}>>\overline{\Bbb{D}}\backslash \{(0,0)\}
\endCD$$
In other words,
\begin{equation}\label{th}
(x_1,x_2)=\pi(x)=\chi\circ \Psi(x)=r(x)\big(\cos\th(x),\sin\th(x)\big)\qquad \forall x\in \Bbb{V}.
\end{equation}

For every fixed vector $a\in \R^{n+1}$, $(\cdot,a)$ is obviously a smooth function on $S^n$. Here and in the sequel
$(\cdot,\cdot)$ denotes the canonical Euclidean inner product. From the theory of spherical geometry,
the normal geodesic $\g$ starting from $x$ and with the initial vector $v$  ($|v|=1$ and $(x,v)=0$) has the form
$$\g(t)=\cos t\ x+\sin t\ v.$$
Then
$$(\g(t),a)=\cos t\ (x,a)+\sin t\ (v,a).$$
Differentiating twice both sides of the above equation with respect to $t$ implies
$$\Hess(\cdot,a)(v,v)=-(x,a).$$
In conjunction with the formula $2\Hess f(v,w)=\Hess f(v+w,v+w)-\Hess f(v,v)-\Hess f(w,w)$, it is easy to obtain
\begin{equation}\label{hess}
\Hess(\cdot,a)=-(\cdot,a)\ g_s
\end{equation}
where $g_s$ is the standard metric on $S^n$. Especially putting $a=\ep_i$ gives
\begin{equation}\label{hessx}
\Hess\ x_i=-x_i\ g_s\qquad \text{for every }1\leq i\leq n+1
\end{equation}
where $\ep_i$ is a unit vector in $\R^{n+1}$ whose $i$-th coordinate is 1 and other coordinates are all 0.

From (\ref{th}), $r^2=x_1^2+x_2^2$, hence
\begin{equation}\label{hessr1}\aligned
\Hess\ r^2=&2x_1\Hess\ x_1+2x_2\Hess\ x_2+2dx_1\otimes dx_1+2dx_2\otimes dx_2\\
          =&-2x_1^2\ g_s-2x_2^2\ g_s+2(\cos\th\ dr-r\sin\th\ d\th)\otimes(\cos\th\ dr-r\sin\th\ d\th)\\
           &+2(\sin\th\ dr+r\cos\th\ d\th)\otimes (\sin\th\ dr+r\cos\th\ d\th)\\
          =&-2r^2\ g_s+2dr\otimes dr+2r^2d\th\otimes d\th.
          \endaligned
\end{equation}
On the other hand,
\begin{equation}\label{hessr2}
\Hess\ r^2=2r\Hess\ r+2dr\otimes dr.
\end{equation}
(\ref{hessr1}) and (\ref{hessr2}) implies
\begin{equation}\label{hessr}
\Hess\ r=-r\ g_s+rd\th\otimes d\th.
\end{equation}
Furthermore (\ref{hessx}), (\ref{th}) and (\ref{hessr}) yield
$$\aligned
-x_1\ g_s&=\Hess\ x_1\\
       &=\cos\th\ \Hess\ r-r\sin\th\ \Hess\ \th-r\cos\th\ d\th\otimes d\th-\sin\th(dr\otimes d\th+d\th\otimes dr)\\
       &=-x_1\ g_s+x_1\ d\th\otimes d\th-r\sin\th\ \Hess\ \th-x_1\ d\th\otimes d\th-\sin\th(dr\otimes d\th+d\th\otimes dr)\\
       &=-x_1\ g_s-r\sin\th\ \Hess\ \th-\sin\th(dr\otimes d\th+d\th\otimes dr).
\endaligned$$
i.e.
$$r\sin\th\ \Hess\ \th=-\sin\th(dr\otimes d\th+d\th\otimes dr).$$
Similarly computing $\Hess\ x_2$ with the aid of (\ref{th}) and (\ref{hessr}) yields
$$r\cos\th\ \Hess\ \th=-\cos\th(dr\otimes d\th+d\th\otimes dr).$$
Therefore
\begin{equation}
\Hess\ \th=-r^{-1}(dr\otimes d\th+d\th\otimes dr).
\end{equation}
The above formula tells us $\Hess\ \th(v,v)=0$ for arbitrary $v\in T\Bbb{V}$ satisfying $\th(v)=0$; in other words,
the level sets of $\th$ are all totally geodesic hypersurfaces.

For arbitrary compact subset $K\subset \Bbb{V}$, there is a constant $c\in (0,1)$, such that $r>c$ on $K$. Therefore, the function
\begin{equation}
\phi=\th+\arcsin(cr^{-1})
\end{equation}
is well-defined on $K$. A direct calculation same as in \cite{jxy} shows $\Hess\ \phi(X,X)>0$ for every $X\in TK$ satisfying $|X|=1$
and $d\phi(X)=0$. Thereby Lemma 2.1 in \cite{jxy} enable us to get $\la$ large enough, such that
\begin{equation}
F=\la^{-1}\exp(\la\phi)
\end{equation}
is strictly convex on $K$. Hence we have

\begin{pro}\label{p2}

Any simply connected subset of $S^n\backslash S^{n-2}$ is a convex supporting set of $S^n$.

\end{pro}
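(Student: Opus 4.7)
The plan is to produce, for every compact subset $K\subset\Bbb{V}$, a smooth strictly convex function on $K$. The construction rests on the generalized longitude $\th$, whose existence on any simply connected $\Bbb{V}\subset S^n\setminus S^{n-2}$ is granted by the lifting theorem applied to the covering $\chi$, and whose Hessian has already been shown to be $\Hess\ \th=-r^{-1}(dr\otimes d\th+d\th\otimes dr)$ at the end of the preceding computation. Since this tensor vanishes on the kernel of $d\th$ (indeed the level sets of $\th$ are totally geodesic), $\th$ alone is only affine in those directions, and a correction term must be added to inject strict convexity transverse to $d\th$.

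Following the hint in the excerpt, I would choose a constant $c$ with $0<c<\inf_K r$, which is possible by compactness of $K$ and by $r>0$ on $\Bbb{V}$, and set $\phi=\th+\arcsin(cr^{-1})$, which is then smooth on $K$. Writing $h(r)=\arcsin(cr^{-1})$ and applying the scalar chain rule for Hessians together with (\ref{hessr}), one finds
$$\Hess\ h=h'(r)\Hess\ r+h''(r)\,dr\otimes dr=-rh'(r)\,g_s+rh'(r)\,d\th\otimes d\th+h''(r)\,dr\otimes dr.$$
The main obstacle is then to verify that $\Hess\ \phi(X,X)>0$ for every unit vector $X\in\ker d\phi$. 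Since $d\phi(X)=0$ forces $d\th(X)=-h'(r)\,dr(X)$, substitution turns $\Hess\ \phi(X,X)$ into a quadratic form in $dr(X)$ and the components of $X$ transverse to $\n r$ and $\n\th$; the strict inequality $c<r$ on $K$ makes $-rh'(r)$ positive with a margin, and an explicit computation parallel to that of \cite{jxy} confirms that the resulting form is positive definite.

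Once such $\phi$ is in hand, I would conclude by the standard exponential device. For $F=\la^{-1}\exp(\la\phi)$ one has $\Hess\ F=\exp(\la\phi)\bigl(\Hess\ \phi+\la\,d\phi\otimes d\phi\bigr)$, so picking $\la$ large enough --- with the size of $\la$ governed only by $C^2$-bounds on $\phi$ over $K$ --- the rank-one term $\la\,d\phi\otimes d\phi$ dominates in any direction where $d\phi\neq 0$ while $\Hess\ \phi$ is already positive on $\ker d\phi$. This is packaged as Lemma 2.1 of \cite{jxy}, and it yields a strictly convex $F$ on $K$. Since $K\subset\Bbb{V}$ was arbitrary, this shows $\Bbb{V}$ is a convex supporting set of $S^n$, completing the argument.
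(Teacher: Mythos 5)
Your proposal reproduces the paper's argument: the same auxiliary function $\phi=\th+\arcsin(cr^{-1})$ with $0<c<\inf_K r$, the same verification that $\Hess\ \phi$ is positive on $\ker d\phi$ using $\Hess\ \th=-r^{-1}(dr\otimes d\th+d\th\otimes dr)$ and $\Hess\ r=-r\,g_s+r\,d\th\otimes d\th$, and the same exponential device $F=\la^{-1}\exp(\la\phi)$ via Lemma 2.1 of \cite{jxy}. You have simply spelled out the chain-rule computation for $\Hess\bigl(\arcsin(cr^{-1})\bigr)$ that the paper elides with "a direct calculation same as in \cite{jxy}," so the content is the same.
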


Our denotation of $\ol{S}_+^{n-1}$ is same as in \cite{jxy}, which is a codimension 1 closed hemisphere in $S^n$. More precisely,
\begin{equation}
\ol{S}_+^{n-1}=\{(x_1,\cdots,x_{n+1})\in S^n: x_1\geq 0,x_2=0\}.
\end{equation}
Then $S^n\backslash \ol{S}_+^{n-1}$ is obviously a simply connected subset
of $S^n\backslash S^{n-2}$, and one can choose $\th$ to be a $(0,2\pi)$-valued
function in $S^n\backslash \ol{S}_+^{n-1}$ as in \cite{jxy}.  Thus the above proposition tells us $S^n\backslash \ol{S}_+^{n-1}$ is a
convex supporting set, which is also a maximal one, since if we add even a single point to $S^n\backslash \ol{S}_+^{n-1}$, it shall
contain a closed geodesic (see \cite{jxy}). Although
Proposition \ref{p2} generalizes the conclusion of Theorem 2.1 in \cite{jxy},
it is still unsloved what is the sufficient and necessary condition ensuring $\Bbb{V}$ to be a convex supporting set. To see the
relationship between convex supporting sets and closed geodesics, please have a look at Appendix.

Now we assume $M$ is an $m$-dimensional Riemannian manifold, $\Bbb{V}$ is a simply connected subset of $S^n\backslash S^{n-2}$.
If $u: M\ra \Bbb{V}$ is a harmonic map, then the composition function $\th\circ u$ defines
a smooth function on $M$. Using composition formula, we have
\begin{equation}\label{la1}\aligned
\De(\th\circ u)&=\Hess\ \th(u_*e_\a,u_*e_\a)+d\th\big(\tau(u))\\
               &=-2(r^{-1}\circ u)dr(u_*e_\a)d\th(u_*e_\a)\\
               &=-2(r^{-1}\circ u)\big\lan \n(r\circ u), \n(\th\circ u)\big\ran
               \endaligned
\end{equation}
where $\tau$ denotes the tensor field of $u$, which is identically zero when $u$ is harmonic; $\n$ denotes the Levi-Civita
connection on $M$; and $\lan\ ,\ \ran$ is the Riemannian metric on $M$. Here and in the sequel we denote by
$\{e_1,\cdots,e_m\}$ a local orthonormal frame field on $M$. We use the summation convention and assume the range
of indices
$$1\leq \a\leq m.$$
(\ref{la1}) is equivalent to
$$\De (\th\circ u)+2(r^{-1}\circ u)\big\lan \n (r\circ u), \n (\th\circ u)\big\ran=0.$$
Multiplying both sides by $r^2\circ u$ yields
\begin{equation}\label{la}
\text{div}\big((r^2\circ u)\n(\th\circ u)\big)=0.
\end{equation}
Here $\text{div}$ is divergence operation with respect to the metric on $M$.

Please note that if we weaken the condition on $u$ by just assuming it is a weakly harmonic map, then a direct computation similar to \cite{J} \S 8.5
and \cite{jxy} shows that $\th\circ u$ is a weak solution to the partial differential equation (\ref{la}). More precisely,
for arbitrary smooth function $\phi$ on $M$ with compact supporting set,
\begin{equation}\label{div}
\int_M (r^2\circ u) \big\lan  \n\phi,\n(\th\circ u)\big\ran*1=0.
\end{equation}
It also can be derived from (\ref{la}) with the aid of classical divergence theorem when $u$ is harmonic.

(\ref{la}) and (\ref{div}) shall play an important part in the next sections.

\begin{rem}

Solomon \cite{so} showed that $S^n\backslash S^{n-2}$ has so-called warped product structure. More precisely,
if we denote
$$S^{n-1}_+=\{(y_1,\cdots,y_n)\in S^{n-1}:y_1>0\},$$
then there is a diffeomorphism $F$ from $S^{n-1}_+\times S^1$ to $S^n\backslash S^{n-2}$
$$F\big((y_1,\cdots,y_n),\varphi\big)=(y_1\cos\varphi,y_1\sin\varphi,y_2,\cdots,y_n).$$
From the viewpoint, $\varphi$ can be regarded as a smooth $S^1$-valued function on $S^n\backslash S^{n-2}$,
and the level sets of $\varphi$ are all totally geodesic and orthogonal to $\f{\p}{\p \varphi}$.

Please note that the function $\th$ we have defined can be seen as the lift of the restriction of $\varphi$ on a convex supporting set, e.g. $S^n\backslash \ol{S}^{n-1}_+$.
Hence (\ref{la}) can also be derived from Lemma 1 in \cite{so}.

\end{rem}

\bigskip

\Section{Harnack's inequalities for elliptic differential equations}{Harnack's inequalities for elliptic differential equations}
\label{s2}

Let $(M,g)$ be a Riemannian manifold, $A$ be a section of vector bundle $T^*M\otimes TM$, such that
for every $y\in M$ and nonzero $X,Y\in T_y M$,
\begin{equation}
\lan X,A(Y)\ran=\lan Y,A(X)\ran
\end{equation}
and
\begin{equation}
\lan X,A(X)\ran>0.
\end{equation}
Then we call $A$ is symmetric and positive definite, and
\begin{equation}\label{eq}
\text{div}\big(A(\n f)\big)=0
\end{equation}
is obviously a partial differential equation of elliptic type.

At first, we assume there is a distance function $d$ on $M$, and the metric
topology induced by $d$ is equivalent to the initial topology of
$M$; moreover, for each $y_1,y_2\in M$, $d(y_1,y_2)\leq
\rho(y_1,y_2)$, where $\rho(\cdot,\cdot)$ is the distance function of
$M$ with respect to the Riemannian metric.
Obviously, $\rho$ is one of the required functions.

Now we fix $y_0\in M$, and let $B_R=B_R(y_0)$ be the ball centered at $y_0$ of radius $R$ given by
the distance function $d$. We assume every function in $B_R$ of $H_0^{1,2}$ type is also a
$L^{2\nu}$-function with $\nu>1$, and there is a positive constant $K_1$, such that for every $r\in [\f{R}{2},R]$
and $v\in H_0^{1,2}(B_r)$,
\begin{equation}\label{con2}
\Big(\aint{B_r}|v|^{2\nu}\Big)^{\f{1}{2\nu}}\leq
K_1r \big(\aint{B_r}|\n v|^2\big)^{\f{1}{2}}.
\end{equation}
This is a Sobolev type inequality. Here $-\hskip -4mm\int_{\Om}v$ denotes the average value of $v$ on arbitrary domain $\Om\subset M$, i.e.
\begin{equation*}
\aint{\Om}v=\f{\int_\Om v*1}{\V(\Om)}.
\end{equation*}
 And $K_2,K_3,\la,\mu,L$ are positive constants satisfying
\begin{eqnarray}
&&\text{Vol}(B_R)\leq K_2{\text{Vol}(B_{\f{R}{2}})},\label{double}\\
&&\sup_{v\neq 0,\int_{B_{\f{3R}{4}}}v*1=0}\f{\int_{B_{\f{3R}{4}}} v^2*1}{\int_{B_{\f{3R}{4}}}|\n v|^2*1}\leq K_3R^2,\label{poin2}\\
&&\la_1:=\inf_{X\in TB_R,X\neq 0}\f{\lan X,A(X)\ran}{\lan X,X\ran},\label{eig1}\\
&&\la_2:=\sup_{X\in TB_R,X\neq 0}\f{\lan X,A(X)\ran}{\lan X,X\ran},\label{eig2}\\
&&L:=\f{\la_2}{\la_1}\label{eig}.
\end{eqnarray}
By classical spectrum theory of harmonic operators, if we denote by $\mu_2$ the second
eigenvalue of $\De v+\mu v=0$ in $B_{\f{3R}{4}}$, where $v$ has a vanish normal derivative on the
boundary of $B_{\f{3R}{4}}$, then the left hand side of (\ref{poin2}) equals $\mu_2^{-1}$. For this reason, an equivalent form of (\ref{poin2})
\begin{equation}\label{str-p}
\int_{B_{\f{ 3R}{4}}}|v-\bar{v}_{B_{ \f{3R}{4}}}|^2*1\leq
K_3R^2\int_{B_{\f{3R}{4}}}|\n v|^2 *1
\end{equation}
is called Neumann-Poincar\'{e} inequality in some references, e.g. \cite{cm}.

Following the idea of \cite{m} and \cite{b-g}, it is not hard for us to derive Harnack inequalities as follows.

\begin{pro}\label{p1}
Let $(M,g)$ be a Riemannian manifold equipped with a distance function $d$, the metric topology induced
by $d$ coincides with initial topology of $M$, and $d(\cdot,\cdot)\leq \rho(\cdot,\cdot)$. If $f$ is a positive
(weak) solution to (\ref{eq}) on the metric ball centered at $y_0$ and of radius $R$, then there exists a positive constant $C_0$, only depending on
$K_1,K_2,K_3$ and $\nu$, but not on $f$, $L$ and $R$, such that
\begin{equation}
\log f_{+,\f{R}{2}}-\log f_{-,\f{R}{2}}\leq C_0L^{\f{1}{2}}.
\end{equation}
\end{pro}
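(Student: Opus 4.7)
The plan is to adapt the Moser--Bombieri--Giusti Harnack scheme to this abstract setting, with the exponent $L^{1/2}$ arising from careful bookkeeping of the ellipticity ratio in the logarithmic Caccioppoli estimate.

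First, I would derive a Caccioppoli-type estimate for $w:=\log f$. Since $f>0$ is a positive weak solution of (\ref{eq}), substituting the test function $\phi=\eta^2/f$ (with $\eta\in C^\infty_c(B_{3R/4})$, $\eta\equiv 1$ on $B_{R/2}$, $|\n\eta|\leq C/R$) into the weak formulation and expanding $\n\phi$ yields
$$\int\frac{\eta^2}{f^2}\lan\n f,A(\n f)\ran*1 \;=\; 2\int\frac{\eta}{f}\lan\n\eta,A(\n f)\ran*1.$$
The critical step is to estimate the right-hand side using Cauchy--Schwarz \emph{for the bilinear form} $\lan\cdot,A(\cdot)\ran$ itself, bounding it by $\big(\int\eta^2 f^{-2}\lan\n f,A(\n f)\ran\big)^{1/2}\big(\int\lan\n\eta,A(\n\eta)\ran\big)^{1/2}$; absorbing the first factor into the left side and invoking $\la_1|X|^2\leq\lan X,A(X)\ran\leq\la_2|X|^2$ only on the remaining pieces produces
$$\int\eta^2|\n w|^2*1 \;\leq\; CL\int|\n\eta|^2*1 \;\leq\; \frac{CL}{R^2}\V(B_{3R/4}),$$
with $C$ independent of $L$.

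Second, combining this estimate with the Neumann--Poincar\'{e} inequality (\ref{str-p}) applied to $w$ on $B_{3R/4}$ gives a BMO-type oscillation bound
$$\aint{B_{3R/4}}|w-\bar{w}_{B_{3R/4}}|^2*1 \;\leq\; CK_3 L,$$
so that $\|\log f\|_{\mathrm{BMO}(B_{3R/4})}=O(\sqrt{L})$. Third, I would pass from BMO to the Harnack oscillation bound via a John--Nirenberg type argument (valid here thanks to the Sobolev inequality (\ref{con2}) and doubling (\ref{double})), producing $\alpha>0$ proportional to $L^{-1/2}$ such that $\aint{B_{R/2}}f^\alpha*1\cdot\aint{B_{R/2}}f^{-\alpha}*1\leq C$. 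A standard Moser iteration on $f^\alpha$ and $f^{-\alpha}$ as sub- and supersolutions, driven only by $\nu,K_1,K_2,K_3$, converts the two averages into $\sup$ and $\inf$; combining then yields
$$\frac{\sup_{B_{R/2}}f}{\inf_{B_{R/2}}f}\;\leq\; C^{1/\alpha}\;\leq\; \exp(C_0 L^{1/2}),$$
and taking logarithms gives the claim under the natural reading of $f_{\pm,R/2}$ as essential sup/inf on $B_{R/2}$ (or as the Moser $L^p$-means to which the iteration applies).

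\textbf{Main obstacle.} The whole scheme is a careful tracking of $L$ through the three stages. The single most delicate point is the logarithmic Caccioppoli step: Cauchy--Schwarz applied with respect to the operator form $\lan\cdot,A(\cdot)\ran$ produces exactly one factor of $L$, whereas a cruder bound (for instance Young's inequality on $\la_2|\n\eta||\n f|$ directly) yields $L^2$ and loses the optimal exponent $1/2$. Once this step is handled correctly, the Moser iteration contributes only constants depending on $K_1,K_2,K_3,\nu$, and the rate $L^{1/2}$ persists unchanged to the final bound.
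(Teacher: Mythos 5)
The paper does not actually prove Proposition \ref{p1}; it cites Moser and Bombieri--Giusti and asserts the adaptation is routine. Your sketch follows exactly that classical three-stage scheme (logarithmic Caccioppoli, Poincar\'e$\to$BMO, then pass to $\sup/\inf$), and your first two stages are correct. In particular the Cauchy--Schwarz estimate with respect to the form $\lan\cdot,A(\cdot)\ran$ applied to the test function $\phi=\eta^2/f$ is precisely the device that yields $\int\eta^2|\n\log f|^2\leq CL\int|\n\eta|^2$ with a single factor of $L$; combined with (\ref{str-p}) this does give the $O(L)$ bound on $\aint|w-\bar w|^2$.

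The gap is in the final step. You claim the Moser iteration that converts the $L^{\pm\alpha}$ averages into $\sup$ and $\inf$ is ``driven only by $\nu,K_1,K_2,K_3$,'' i.e.\ produces a constant independent of $L$, so that the whole $L$-dependence enters through $C^{1/\alpha}$. That is not true. The Caccioppoli estimate for $v=f^\beta$ has the form $\int\eta^2|\n v|^2\leq \f{4L\beta^2}{(2\beta-1)^2}\int v^2|\n\eta|^2$; running the iteration from the starting exponent $\alpha\sim L^{-1/2}$ up to $\infty$ accumulates a multiplicative constant $\mathcal M$ with $\log\mathcal M=O(L^{1/2})$ (the $L$ in the Caccioppoli constant is cancelled by $\beta^2$ only while $\beta\lesssim L^{-1/2}$, and the $\beta\geq 1$ range contributes a factor polynomial in $L$ raised to the power $1/\alpha$). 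The bound $\sup_{B_{R/2}}f/\inf_{B_{R/2}}f\leq \exp(C_0L^{1/2})$ still holds, but only because $\log\mathcal M$ happens also to be $O(L^{1/2})$ --- a fact that has to be tracked through the iteration, not assumed away. There is a second, smaller bookkeeping slip: for $0<\alpha<1$, $f^\alpha$ is a \emph{super}solution and $f^{-\alpha}$ a \emph{sub}solution, so iterating on $f^\alpha$ as you describe controls $\inf f$, not $\sup f$; to bound $\sup f$ one must iterate on $f$ itself (as a subsolution) starting from the $L^\alpha$ mean. Both issues are repairable and the overall architecture is the right one, but as written the key ``only $C^{1/\alpha}$ carries the $L$'' assertion is the part that actually requires the proof.
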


Here and in the sequel, $B_R=B_R(y_0)$ and
\begin{equation}
f_{+,R}:=\sup_{B_R}f,\qquad f_{-,R}:=\inf_{B_R}f.
\end{equation}

\begin{rem}

For divergence elliptic partial differential equations in open domain $\Om\subset \R^n$, one can deduce the classical Harnack inequality
(see \cite{m})
\begin{equation}\label{har15}
\sup_{\Om'}f\leq C(\Om',\Om)^{L^{\f{1}{2}}}\inf_{\Om'}f
\end{equation}
for arbitrary $\Om'\subset\subset \Om$. In addition $\Om$ is convex one can take
\begin{equation}
C(\Om',\Om)=\Big(\f{\text{diam}\ \Om}{\text{dist}(\Om',\p \Om)}\Big)^{\be}
\end{equation}
with a positive constant $\be$. Bombieri-Giusti \cite{b-g} generalized the conclusion to this type of partial differential equations
on area-minimizing hypersurfaces on Euclidean spaces. The above proposition is a further generalization. Please note that the example
$$\f{\p^2 f}{\p x^2}+L\f{\p^2 f}{\p y^2}=0$$
with solution
$$f=\exp(L^{\f{1}{2}}x)\cos y$$
shows that the dependence on $L$ in (\ref{har15}) cannot be improved.

\end{rem}

Now we let $f$ be an arbitrary (weak) $L^\infty$-solution of (\ref{eq}) on $B_R$ (not necessarily positive), then $f-f_{-,R}+\ep$ is obviously a positive
(weak) $L^\infty$-solution
for each $\ep>0$. Applying Proposition \ref{p1} to $f-f_{-,R}+\ep$ yields
$$\log(f_{+,\f{R}{2}}-f_{-,R}+\ep)-\log(f_{-,\f{R}{2}}-f_{-,R}+\ep)\leq C_0L^{\f{1}{2}},$$
i.e.
$$f_{+,\f{R}{2}}-f_{-,R}+\ep\leq \exp(C_0L^{\f{1}{2}})(f_{-,\f{R}{2}}-f_{-,R}+\ep).$$
Letting $\ep\ra 0$ implies
$$f_{+,\f{R}{2}}-f_{-,R}\leq \exp(C_0L^{\f{1}{2}})(f_{-,\f{R}{2}}-f_{-,R}),$$
then
$$\aligned
&f_{+,\f{R}{2}}-f_{-,\f{R}{2}}=(f_{+,\f{R}{2}}-f_{-,R})-(f_{-,\f{R}{2}}-f_{-,R})\\
\leq&\big(1-\exp(-C_0L^{\f{1}{2}})\big)(f_{+,\f{R}{2}}-f_{-,R})\leq \big(1-\exp(-C_0L^{\f{1}{2}})\big)(f_{+,R}-f_{-,R}).
\endaligned$$
Thereby we get the estimate for oscillation of $f$ as follows.

\begin{cor}\label{cor1}
Our assumption on $M$ is same as in Proposition \ref{p1}.
If $f$ is a (weak) $L^\infty$-solution
of (\ref{eq}) in $B_R=B_R(y_0)$ with $y_0\in M$, then the oscillation of $f$ on $B_{\f{R}{2}}$ could be estimated by
\begin{equation}
\text{osc}_{B_{\f{R}{2}}}f\leq \big(1-\exp(-C_0L^{\f{1}{2}})\big)\text{osc}_{B_R}f
\end{equation}
with a positive constant $C_0$ depending on $K_1,K_2,K_3$ and $\nu$, but not on $L$ and $R$.
\end{cor}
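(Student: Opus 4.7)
My plan is to reduce the oscillation estimate to the log--Harnack bound of Proposition \ref{p1} by a standard shift-and-exponentiate argument, so no new analytic machinery is needed beyond what is already in place.

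First, I would observe that equation (\ref{eq}) is linear and that constants lie in its kernel (since $\n$ annihilates constants, and hence so does the divergence-form operator $\text{div}\circ A\circ \n$). Consequently, if $f$ is a weak $L^\infty$-solution on $B_R$, then for every $\ep>0$ the shifted function
\[
g_\ep := f - f_{-,R} + \ep
\]
is a strictly positive bounded weak solution of (\ref{eq}) on $B_R$. I would then apply Proposition \ref{p1} to $g_\ep$, which yields
\[
\log(f_{+,\f{R}{2}} - f_{-,R} + \ep) - \log(f_{-,\f{R}{2}} - f_{-,R} + \ep) \leq C_0 L^{\f{1}{2}}.
\]
Exponentiating and sending $\ep\to 0^+$ removes the regularizer and produces
\[
f_{+,\f{R}{2}} - f_{-,R} \leq \exp(C_0 L^{\f{1}{2}})\,\bigl(f_{-,\f{R}{2}} - f_{-,R}\bigr).
\]

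To pass from this inequality to the desired oscillation estimate, I would write
\[
\text{osc}_{B_{\f{R}{2}}} f = \bigl(f_{+,\f{R}{2}} - f_{-,R}\bigr) - \bigl(f_{-,\f{R}{2}} - f_{-,R}\bigr),
\]
use the previous bound in the reciprocal form $f_{-,\f{R}{2}} - f_{-,R} \geq \exp(-C_0 L^{\f{1}{2}})(f_{+,\f{R}{2}} - f_{-,R})$, and finally majorize $f_{+,\f{R}{2}} - f_{-,R} \leq f_{+,R} - f_{-,R} = \text{osc}_{B_R} f$. This chain delivers exactly
\[
\text{osc}_{B_{\f{R}{2}}} f \leq \bigl(1-\exp(-C_0 L^{\f{1}{2}})\bigr)\text{osc}_{B_R} f,
\]
with the same constant $C_0$ as in Proposition \ref{p1}.

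The only real point requiring care is the introduction of the positive regularizer $\ep>0$ before invoking Proposition \ref{p1}: the proposition is stated for strictly positive solutions, so one cannot apply it directly to $f-f_{-,R}$ (which may vanish at points where the infimum is attained, and whose logarithm is therefore not bounded below). The shift and the subsequent passage to the limit is thus the main---and essentially only---obstacle; the rest of the argument is algebraic manipulation of the resulting inequality.
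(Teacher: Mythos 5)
Your proof is correct and coincides essentially step for step with the paper's own derivation: shift by the infimum plus a regularizer $\ep>0$, apply Proposition \ref{p1} to the strictly positive solution $f-f_{-,R}+\ep$, exponentiate, let $\ep\to 0$, and then rewrite the oscillation as a telescoping difference to extract the factor $1-\exp(-C_0L^{1/2})$. The remark explaining why the $\ep$-shift is needed (so that Proposition \ref{p1}'s positivity hypothesis is met) is accurate and matches the implicit reasoning in the paper.
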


\bigskip

\Section{Image shrinking property of harmonic maps}{Image shrinking property of harmonic maps}

Our denotation and assumption is same as in Section \ref{s1} and Section \ref{s2}.
Compared with (\ref{la}) and (\ref{eq}),
$\th\circ u$  satisfies a type of elliptic Partial differential equation with $A=(r^2\circ u)\mathbf{Id}$, where
$\mathbf{Id}$ denotes a smooth section of $T^*M\otimes TM$ satisfying
$$\mathbf{Id}(X)=X\qquad \text{for every }X\in TM.$$
Since $r$ is a $(0,1]$-valued function,
\begin{equation}
L:=\f{\sup_{B_R}(r^2\circ u)}{\inf_{B_R}(r^2\circ u)}\leq \sup_{B_R}(r^{-2}\circ u).
\end{equation}

Now we give an additional assumption on $M$
that there is $R_0\in (0,+\infty]$, one can find uniform constants $K_1,K_2,K_3$
and $\nu$ which are all independent of $R\in (0,R_0]$, such that the estimates
(\ref{con2})-(\ref{poin2}) hold true. As a matter of convenience, we call $M$
satisfies 'local DSVP-condition' with respect to $y_0$ in the sequel.

\begin{rem}\label{r1}

Local DSVP-condition is comparable with DVP-condition in \cite{jxy}. From the work of Saloff-Coste \cite{sc} and Biroli-Mosco \cite{bm},
if 'doubling property' (\ref{double}) holds for arbitrary $y\in M$, $R\leq R_0$, and
Neumann-Poincar\'{e} inequality (\ref{str-p}) holds for arbitrary $B_{\f{3R}{4}}(y)\subset\subset M$ with uniform constants $R_0$, $K_2$ and $K_3$,
then Sobolev inequality (\ref{con2}) is satisfied whenever $B_{2R}(y)\subset\subset M$ and $R\leq \f{R_0}{2}$, with constants
$\nu$ and $K_1$ depending only on $K_2$ and $K_3$. it means that DVP-condition implies
local DSVP-condition with respect to arbitrary $y\in M$.

\end{rem}

  Corollary \ref{cor1} gives
\begin{equation}\label{osc1}
\text{osc}_{B_{\f{R}{2}}}(\th\circ u)\leq \big(1-\exp(-C_0\sup_{B_R}(r^{-1}\circ u))\big)\text{osc}_{B_R}(\th\circ u)
\end{equation}
with a constant $C_0$ independent of $R\leq R_0$. As a matter of convenience, we shall use abbreviations as follows
\begin{equation}
\Th:=\th\circ u,\qquad M(R):=\sup_{B_R}(r^{-1}\circ u)\ (R\in (0,R_0])
\end{equation}
in the sequel. Taking logarithms of both sides of (\ref{osc1}) gives
\begin{equation}
\log \text{osc}_{B_{\f{R}{2}}}\Th-\log\text{osc}_{B_R}\Th\leq \log\big(1-\exp(-C_0M(R))\big)
\end{equation}
for arbitrary $R\leq R_0$. After iteration we arrive at
\begin{equation}\label{osc2}
\log\text{osc}_{B_{2^{-k}R_0}}\Th-\log\text{osc}_{B_{R_0}}\Th\leq \sum_{j=0}^{k-1}\log\big(1-\exp(-C_0M(2^{-j}R_0))\big)
\end{equation}
for every $k\in \Bbb{Z}^+$.
By additionally defining $M(R)=M(R_0)$ when $R\in [R_0,2R_0]$, $M$ can be regarded as an increasing function on
$(0,2R_0]$. The right hand of above inequality could be estimated by
\begin{equation}\label{osc3}\aligned
&\sum_{j=0}^{k-1}\log\big(1-\exp(-C_0M(2^{-j}R_0))\big)\\
\leq&\int_{-1}^{k-1}\log\big(1-\exp(-C_0M(2^{-t}R_0))\big)dt\\
\leq&(\log 2)^{-1}\int_{2^{-k+1}R_0}^{2R_0}R^{-1}\log\big(1-\exp(-C_0M(R))\big)dR
\endaligned
\end{equation}
For every $R\leq \f{R_0}{2}$, there exists $k\in \Bbb{Z}^+$, such that $2^{-k-1}R_0<R\leq 2^{-k}R_0$; it is easy to get the following
estimate of oscillation by combining (\ref{osc2}) and (\ref{osc3}):
\begin{equation}\label{sh4}
\log\text{osc}_{B_R}\Th-\log\text{osc}_{B_{R_0}}\Th\leq (\log 2)^{-1}\int_{4R}^{2R_0}R^{-1}\log\big(1-\exp(-C_0M(R))\big)dR.
\end{equation}
Consider function $t\in (0,\exp(-C_0)]\mapsto -\f{\log(1-t)}{t}$;
since
$$\lim_{t\ra 0^+}-\f{\log(1-t)}{t}=\lim_{t\ra 0^+}-\f{\big[\log(1-t)\big]'}{t'}=1,$$
there is a positive constant $c_1$, depending only on $C_0$, such that
\begin{equation}\label{c8}
-\f{\log(1-t)}{t}\geq c_1\qquad \text{i.e. }\log(1-t)\leq -c_1t
\end{equation}
for all $t\in (0,\exp(-C_0)]$. Especially
$$\log\big(1-\exp(-C_0M(R))\big)\leq -c_1\exp(-C_0M(R)).$$
Substituting it into (\ref{sh4}) gives
\begin{equation}\label{sh2}
\log\text{osc}_{B_R}\Th-\log\text{osc}_{B_{R_0}}\Th\leq -(\log 2)^{-1}c_1\int_{4R}^{2R_0}R^{-1}\exp(-C_0M(R))dR.
\end{equation}
Again using the monotonicity of $M(R)$ implies
\begin{equation}\label{sh3}\aligned
\text{the right hand side of (\ref{sh2})}
\leq&-(\log 2)^{-1}c_1\exp(-C_0M(R_0))\int_{4R}^{2R_0}R^{-1}dR\\
=&-(\log 2)^{-1}c_1\exp(-C_0M(R_0))\log\Big(\f{R_0}{2R}\Big).
\endaligned
\end{equation}
From the estimates we can get so-called 'image shrinking property' of (weakly) harmonic maps.

\begin{thm}\label{t1}

Let $(M,g)$ be a Riemannian manifold satisfying local DSVP-condition with respect to $y_0\in M$ with constants $R_0>0, \nu>1$ and
$K_1,K_2,K_3>0$, $\Bbb{V}$ be a simply connected subset of $S^n\backslash S^{n-2}$.
 If $u: M\ra S^n$ is a (weakly) harmonic map, $u(B_{R_0})\subset K$, where $K$ is a compact subset of $\Bbb{V}$,
then there exists positive constants $C_0$ and $C_1$, depending only on $\nu,K_1,K_2,K_3$ and $K$, such that the image of $B_{R_1}$
under $u$ is contained in a closed geodesic ball of radius $\arccos\big(\f{1}{2}M(R_1)^{-1}\big)<\f{\pi}{2}$ in $S^n$, where
\begin{equation}\label{sh}
R_1:=\f{1}{2}\exp\big(-C_1\exp(C_0M(R_0))\big)R_0.
\end{equation}
Especially if $u(B_{R_0})\subset S^n\backslash\ol{S}_+^{n-1}$,
then our conclusion still holds true when we just assume
$$M(R_0):=\sup_{B_{R_0}}(r^{-1}\circ u)<+\infty.$$

\end{thm}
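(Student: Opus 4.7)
The plan is to combine the iterated oscillation decay already derived in (\ref{sh2})--(\ref{sh3}) with an elementary observation on $S^n$: since the level sets of $\th$ are totally geodesic codimension-$1$ hemispheres, forcing $\Th := \th \circ u$ to have sufficiently small oscillation on $B_{R_1}$ traps $u(B_{R_1})$ inside a narrow wedge between two meridians, and combining this with the pointwise lower bound on $r \circ u$ produces a spherical cap of radius strictly less than $\f{\pi}{2}$.

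First I would set $\a := (\log 2)^{-1} c_1 \exp(-C_0 M(R_0))$ and $B := \text{osc}_{B_{R_0}} \Th$, so that (\ref{sh2})--(\ref{sh3}) rearranges (after exponentiation) to
\begin{equation*}
\text{osc}_{B_R} \Th \leq B \left(\f{2R}{R_0}\right)^{\a} \qquad \text{for all } 0 < R \leq \f{R_0}{2}.
\end{equation*}
Since $u(B_{R_0}) \subset K$ with $K$ a compact subset of $\Bbb{V}$ and $\th$ is continuous on $K$, the quantity $B$ is bounded by a constant depending only on $K$. Demanding $\text{osc}_{B_{R_1}} \Th \leq \f{2\pi}{3}$ and solving for the largest such radius yields
\begin{equation*}
R_1 = \f{R_0}{2} \exp\bigl(-(\log 2)\, c_1^{-1}\, \log(3B/(2\pi)) \exp(C_0 M(R_0))\bigr),
\end{equation*}
which is of the form (\ref{sh}) for a suitable constant $C_1$ depending only on $C_0$, $c_1$, and $K$, and hence on $\nu, K_1, K_2, K_3, K$.

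Second I would choose the center of the target spherical cap as the equatorial point lying on the central meridian,
\begin{equation*}
\th_0 := \tfrac{1}{2}\bigl(\sup_{B_{R_1}} \Th + \inf_{B_{R_1}} \Th\bigr), \qquad p_0 := (\cos \th_0, \sin \th_0, 0, \ldots, 0) \in S^n.
\end{equation*}
By (\ref{th}), for every $y \in B_{R_1}$,
\begin{equation*}
\lan p_0, u(y) \ran = r(u(y))\bigl[\cos \Th(y) \cos \th_0 + \sin \Th(y) \sin \th_0\bigr] = r(u(y)) \cos(\Th(y) - \th_0).
\end{equation*}
By construction $|\Th(y) - \th_0| \leq \f{1}{2} \text{osc}_{B_{R_1}} \Th \leq \f{\pi}{3}$, so $\cos(\Th(y) - \th_0) \geq \f{1}{2}$, whence
\begin{equation*}
\lan p_0, u(y) \ran \geq \f{r(u(y))}{2} \geq \f{1}{2} M(R_1)^{-1}.
\end{equation*}
This is exactly the assertion that $u(B_{R_1})$ lies in the closed geodesic ball of radius $\arccos\bigl(\f{1}{2} M(R_1)^{-1}\bigr) < \f{\pi}{2}$ centered at $p_0$.

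For the addendum concerning $u(B_{R_0}) \subset S^n \backslash \ol{S}_+^{n-1}$: as recalled in Section \ref{s1}, the lift $\th$ can be chosen to be $(0, 2\pi)$-valued in this case, so $B \leq 2\pi$ holds with no compactness assumption on the image, and only the quantitative hypothesis $M(R_0) < +\infty$ is required. The main obstacle is conceptual rather than technical: one must recognize that the natural center of the spherical cap is the equatorial point $p_0$ on the central meridian of $u(B_{R_1})$ rather than any point of $u(B_{R_1})$ itself. Once this choice is identified the argument collapses into the identity $\lan p_0, u(y) \ran = r(u(y)) \cos(\Th(y) - \th_0)$ combined with the already-established oscillation decay. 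The trivial edge case $B \leq \f{2\pi}{3}$, in which no iteration is needed and one simply takes $R_1 = R_0/2$, is handled separately.
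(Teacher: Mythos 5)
Your proposal is correct and follows the same route as the paper: derive the power-law oscillation decay for $\Th=\th\circ u$ from (\ref{sh2})--(\ref{sh3}), choose $R_1$ so that $\text{osc}_{B_{R_1}}\Th\leq\f{2\pi}{3}$, pick the equatorial center point $x_0=(\cos\th_0,\sin\th_0,0,\dots,0)$, and use $(u(y),x_0)=r(u(y))\cos(\Th(y)-\th_0)\geq\f{1}{2}M(R_1)^{-1}$. The only cosmetic differences are that you write the oscillation decay explicitly as a power law $B\,(2R/R_0)^{\a}$ before solving for $R_1$, and you flag the trivial case $\text{osc}_{B_{R_0}}\Th\leq\f{2\pi}{3}$ (which the paper silently absorbs by taking the constant $c_2$ at least slightly larger than $\f{2\pi}{3}$ so that $C_1>0$).
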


\begin{rem}

Theorem \ref{t1} is an improvement and a generalization of Theorem 5.1 in \cite{jxy}. Firstly, as shown in Remark \ref{r1}, local DSVP-condition can
be derived from DVP-condition, so our assumption on $M$ is weaker than that in \cite{jxy}. Secondly, in \cite{jxy}, 'image shrinking property' of
$u$ requires that $u(B_{R_0})$ is contained in a compact set $K\subset S^n\backslash \ol{S}_+^{n-1}$,
which is obviously a simply connected subset of $S^n\backslash S^{n-2}$, therefore the first statement generalizes the corresponding
 conclusion in \cite{jxy}.
Finally, if  $u(B_{R_0})\subset S^n\backslash \ol{S}_+^{n-1}$, then image shrinking property still hold true when just assuming
the composition of $r^{-1}$ and $u$ is bounded; in contrast, $\th\circ u$ can converge to $0$ or $2\pi$ at arbitrary speed near the boundary of $B_{R_0}$.
It is an improvement.

\end{rem}

\begin{proof}

Since $K$ is a closed subset of $\Bbb{V}$, $\Th=\th\circ u$ is a bounded function
on $B_{R_0}$. More precisely, there is a positive constant $c_2$ depending only on $K$, such that
\begin{equation}\label{osc4}
\text{osc}_{B_{R_0}}\Th\leq c_2.
\end{equation}

The constants $C_0$ and $c_1$ has been given above.
Now we choose
\begin{equation}
C_1:=\log 2\log \Big(\f{3c_2}{2\pi}\Big)c_1^{-1}
\end{equation}
then by (\ref{sh}),
\begin{equation}\label{sh1}
R_1=\f{1}{2}\exp\Big(-\log 2\log \Big(\f{3c_2}{2\pi}\Big)c_1^{-1}\exp(C_0M(R_0))\Big)R_0
\end{equation}
Substituting (\ref{sh1}) into (\ref{sh2}) and (\ref{sh3}) yields
$$\log\text{osc}_{B_{R_1}}\Th-\log\text{osc}_{B_{R_0}}\Th\leq -\log \Big(\f{3c_2}{2\pi}\Big).$$
In conjunction with (\ref{osc4}) we have  $\text{osc}_{B_{R_1}}\Th\leq \f{2\pi}{3}$. It enable us to find
$\th_0\in \R$, such that
$$(\th\circ u)\big|_{B_{R_1}}\leq [\th_0-\f{\pi}{3},\th_0+\f{\pi}{3}].$$
Denote $x_0=(\cos\th_0,\sin\th_0,0,\cdots,0)$, then for arbitrary $y\in B_{R_1}$,
$$\big(u(y),x_0\big)=r\circ u(y)\big(\cos(\Th-\th_0)\big)\geq \f{1}{2}r\circ u(y)\geq \f{1}{2}M(R_1)^{-1}$$
which implies $u(B_{R_1})$ is contained in the closed geodesic ball centered at $x_0$ and of radius $\arccos\big(\f{1}{2}M(R_1)^{-1}\big)$.

Noting that condition $u(B_{R_0})\subset S^n\backslash \ol{S}_{+}^{n-1}$ implies $\text{osc}_{B_{R_0}}\Th\leq 2\pi$, we can derive the second statement in the same way.
\end{proof}

Furthermore a regularity theorem of weakly harmonic maps easily follows.

\begin{thm}\label{t2}

Let $(M,g)$ be an arbitrary Riemannian manifold, $u: M\ra S^n$ be a weakly harmonic map, and
$\Bbb{V}$ be a simply connected subset of $S^n\backslash S^{n-2}$. Given $y_0\in M$, if there is
a neighborhood $U$ of $y_0$, such that $u(U)\subset K$ with $K$ a compact subset of $\Bbb{V}$,
then $u$ is smooth on a neighborhood of $y_0$. Especially if the image of $U$ under $u$ is contained in $S^n\backslash \ol{S}_+^{n-1}$, and
$$\sup_U (r^{-1}\circ u)<+\infty,$$
then the smoothness of $u$ near $y_0$ holds true.

\end{thm}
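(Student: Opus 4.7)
The plan is to reduce the regularity question to the classical Hildebrandt-Kaul-Widman theorem by first applying the image shrinking property (Theorem \ref{t1}) to a sufficiently small ball around $y_0$. Because the hypothesis only controls $u$ locally and does not assume anything globally about $M$, I first need to produce local DSVP-data. I would work in normal coordinates at $y_0$ on a small metric ball $B_{R_0}(y_0)\subset U$; on such a ball the Riemannian metric is uniformly close to the Euclidean one, so the Sobolev inequality (\ref{con2}), the doubling property (\ref{double}) and the Neumann-Poincar\'{e} inequality (\ref{poin2}) all hold on every subball of radius $\leq R_0$ with uniform constants depending only on $\dim M$. Thus $M$ satisfies local DSVP-condition with respect to $y_0$, and no further structural assumption is needed.

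Next I would invoke Theorem \ref{t1} in the ball $B_{R_0}(y_0)$. In the first situation, the compactness of $K\subset \Bbb{V}\subset S^n\backslash S^{n-2}$ ensures that $r^{-1}$ is bounded on $K$, so
$$M(R_0)=\sup_{B_{R_0}}(r^{-1}\circ u)<+\infty,$$
and Theorem \ref{t1} yields an $R_1>0$ for which $u(B_{R_1}(y_0))$ is contained in a closed geodesic ball of radius $\arccos\bigl(\tfrac{1}{2}M(R_1)^{-1}\bigr)<\tfrac{\pi}{2}$, i.e.\ in a compact subset of an open hemisphere of $S^n$. In the second situation, $u(U)\subset S^n\backslash\ol{S}_+^{n-1}$ together with the finiteness of $\sup_U(r^{-1}\circ u)$ triggers the second half of Theorem \ref{t1}, which gives the same conclusion without requiring the a priori compact image inside $\Bbb{V}$.

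At this point the problem is reduced to the classical situation: a weakly harmonic map from (a piece of) a Riemannian manifold into $S^n$ whose image lies in a compact subset of an open hemisphere. The Hildebrandt-Kaul-Widman regularity theorem \cite{h-k-w} then applies and gives that $u$ is smooth on $B_{R_1}(y_0)$, hence on a neighborhood of $y_0$, as required. The only genuinely delicate point is the one already built into the setup of Section \ref{s1}: the simple connectivity of $\Bbb{V}$ must be used to lift the angular coordinate through the covering $\chi$ so that $\Th=\th\circ u$ is a single-valued weak solution of (\ref{la}); once this is justified and Theorem \ref{t1} is in hand, the present theorem follows as a direct corollary, and I do not anticipate any substantive additional obstacle.
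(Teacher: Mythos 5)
Your proposal matches the paper's proof essentially step for step: set up a coordinate patch around $y_0$ in which the metric is comparable to the Euclidean one (the paper uses a general chart with bounds $\lambda\le g\le\mu$ rather than normal coordinates, but the effect is the same), verify local DSVP with explicit constants, apply Theorem~\ref{t1} to shrink the image of a smaller ball $B_{R_1}(y_0)$ into a closed geodesic ball of radius $<\pi/2$, and then invoke the known regularity theory for weakly harmonic maps into regular balls to conclude. The only cosmetic difference is that the paper attributes the final regularity step to proceeding "as in \cite{h-j-w} and \cite{jxy}" (oscillation estimates, H\"older continuity, then higher regularity) rather than citing \cite{h-k-w} directly, but these are the same circle of results, so this is not a substantive divergence.
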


\begin{rem}

The main theorem in \cite{jxy} says a weakly harmonic map $u$ into $S^n$ is smooth near $y_0$ if the image of a neighborhood of $y_0$ is contained
in a compact subset of $S^n\backslash \ol{S}_+^{n-1}$. Our result is an improvement and a generalization of it.

\end{rem}

\begin{proof}

 We just give the proof of the first statement here, because the proof of the second one is quite similar.

By the definition of Riemannian manifolds, each point has a coordinate patch with induced metric. Hence without loss of generality
we can assume $U$ is a Euclidean ball centered at $y_0=0$ and of radius $R_0$ equipped with metric $g=g^{\a\be}dy^\a dy^\be$,
where $(y^1,\cdots,y^m)$ denotes Euclidean coordinate; and there exists two positive constants $\la$ and $\mu$, such that
$$\la^2|\xi|^2\leq g_{\a\be}(y)\xi^a\xi^\be\leq \mu^2|\xi|^2$$
for arbitrary $y\in U$ and $\xi\in \R^m$. By a standard scaling argument, we can assume $\la=1$ without loss of generality.

Let $d$ be the canonical Euclidean distance function, i.e.
$$d: (y_1,y_2)\in U\times U\mapsto |y_1-y_2|,$$
then obviously $d(\cdot,\cdot)\leq \rho(\cdot,\cdot)$, the distance function induced by $g$.

Denote $dy=dy^1\w\cdots\w dy^m$, then $*1=\sqrt{\det(g_{\a\be})}dy$ with
\begin{equation}\label{vol}
1\leq \sqrt{\det(g_{\a\be})}\leq \mu^m.
\end{equation}
It is well-know that $\n v=g^{\a\be}D^\a v D^\be v$, where $(g^{\a\be})$ is the inverse matrix of $(g_{\a\be})$, hence
\begin{equation}
|\n v|^2=g^{\a\be}D^\a v D^\be v\geq \mu^{-2}|Dv|^2.
\end{equation}
Recall that the classical Sobolev inequality says
\begin{equation}\label{sob}
\Big(\int_{B_R}v^{\f{m}{m-1}}dy\Big)^{\f{m-1}{m}}\leq C(m)\int_{B_R}|D v|dy
\end{equation}
for arbitrary nonnegative $C^1$-function $v$ on $B_R$ whose supporting set is contained in $B_R$. Then it could be derived from (\ref{vol})-(\ref{sob}) that
$$\aligned
&\Big(\int_{B_R}v^{\f{m}{m-1}}*1\Big)^{\f{m-1}{m}}\leq \Big(\mu^m\int_{B_R}v^{\f{m}{m-1}}dy\Big)^{\f{m-1}{m}}\\
\leq& \mu^{m-1}C(m)\int_{B_R}|D v|dy\leq \mu^{m}C(m)\int_{B_R}|\n v|*1.
\endaligned$$
By using H\"{o}lder inequality, it is easily-seen that for arbitrary $q\geq \f{m}{m-1}$,
\begin{equation}\label{sob1}
\Big(\int_{B_R} v^q*1\Big)^{\f{1}{q}}\leq \f{q(m-1)}{m}\mu^m C(m)\Big(\int_{B_R} |\n v|^{(\f{1}{m}+\f{1}{q})^{-1}}*1\Big)^{\f{1}{m}+\f{1}{q}}
\end{equation}
and moreover
\begin{equation}\label{sob2}
\Big(\aint{B_R}v^q\Big)^{\f{1}{q}}\leq \f{q(m-1)}{m}\mu^m C(m)V(R)^{\f{1}{m}}\Big(\aint{B_R}|\n v|^{(\f{1}{m}+\f{1}{q})^{-1}}\Big)^{\f{1}{m}+\f{1}{q}}
\end{equation}
It directly follows from (\ref{vol}) that $V(R)\leq \mu^m \om_m R^m$, with $\om_m$ the volume of $m$-dimensional Euclidean disk
equipped with canonical metric.
Hence (\ref{sob2}) enable us to choose
$$\nu=\left\{\begin{array}{cc} 4 & \text{if }m=2\\ \f{m}{m-2} & \text{if }m\geq 3\end{array}\right.$$
and
$$K_1=\f{2\nu(m-1)}{m}\mu^{m+1}\om_m^{\f{1}{m}}C(m)$$
to ensure (\ref{con2}) hold true.

By straightforward calculation similar to \cite{jxy} Section 6.1, one can make sure (\ref{double}) and (\ref{str-p}) hold true by putting
$K_2=(2\mu)^m$ and $K_3=\f{9}{4}\pi^{-2}\mu^{m+2}$. Hence Theorem \ref{t1} enable us to find two constant $C_0$ and $C_1$,
depending only on $m$, $\mu$ and $V$, such that $u(B_{R_1})$ is contained in a closed geodesic ball of radius $<\f{\pi}{2}$, if we denote
$$R_1=\f{1}{2}\exp\big(-C_1\exp (C_0M(R_0))\big)R_0\qquad \text{with }M(R_0):=\sup_{U}(r^{-1}\circ u).$$

Now we can proceed as in \cite{h-j-w} and \cite{jxy} to obtain estimates of the oscillation of $u$ and moreover the H\"{o}lder estimates
for $u$, which implies $u$ is H\"{o}lder continuous in a neighborhood of $y_0$. Finally $u$ has to be smooth near $y_0$
by the higher regularity results for harmonic maps.

\end{proof}

\bigskip

\Section{Curvature estimates for minimal hypersurfaces}{Curvature estimates for minimal hypersurfaces}

Let $M^m$ be an imbedded minimal hypersurface (not necessarily complete) in $(m+1)$-dimensional Euclidean space equipped with the induced Riemannian metric.
Denote the restriction of Euclidean distance function on $M$ by $d$:
$$(y_1,y_2)\in M\times M\mapsto |y_1-y_2|.$$
Then it is easily-seen that $d(y_1,y_2)\leq \rho(y_1,y_2)$, which are called the extrinsic and intrinsic function, respectively.
Since the inclusion map $i: M\ra \R^{m+1}$ is injective, the metric topology induced by $d$ coincides with initial topology of $M$.

Fix $y_0\in M$, denote by $B_R$ the intersection of $M$ and the Euclidean ball centered at $y_0$ and of radius $R$,
which is also the metric ball given by $d$. As shown in \cite{m-s}, for every nonnegative function $v$ of $C^1$-type which vanishes
outside a compact subset of $B_R$, the following Sobolev inequality
\begin{equation}
\Big(\int_{B_R} v^{\f{m}{m-1}}*1\Big)^{\f{m-1}{m}}\leq C(m)\int_{B_R} |\n v|*1
\end{equation}
holds. Then as in the proof of Theorem \ref{t2}, one can arrive at
\begin{equation}\label{sob3}
\Big(\aint{B_R}v^q\Big)^{\f{1}{q}}\leq \f{q(m-1)}{m} C(m)V(R)^{\f{1}{m}}\Big(\aint{B_R}|\n v|^{(\f{1}{m}+\f{1}{q})^{-1}}\Big)^{\f{1}{m}+\f{1}{q}}
\end{equation}
for arbitrary $q\geq \f{m}{m-1}$. Here and in the sequel, $V(R):=\text{Vol}(B_R)$.

The definition of $\om_m$ is similar to above. Given $y\in M$ and $R>0$, the volume density is define by
\begin{equation}
\mathcal{D}(y,R):=\f{V(y,R)}{\om_m R^m}
\end{equation}
The well-known monotonicity theorem tells us $\mathcal{D}(y,R)$ is nondecreasing in $R$
and $\lim_{R\ra 0^+}\mathcal{D}(y,R)=1$. Thus for arbitrary given $R_0>0$,
\begin{equation}\label{vol1}
\om_m R^m\leq V(R)\leq \mathcal{D}(R_0)\om_m R^m
\end{equation}
for all $R\in (0,R_0]$, where $\mathcal{D}(R_0)$ is the abbreviation of $\mathcal{D}(y_0,R_0)$.

By
(\ref{sob3}) and (\ref{vol1}), if we take
$$\nu:=\left\{\begin{array}{cc} 4 & \text{if }m=2\\ \f{m}{m-2} & \text{if }m\geq 3\end{array}\right.$$
and
$$K_1:=\f{2\nu(m-1)}{m}\mathcal{D}(R_0)^{\f{1}{m}}\om_m^{\f{1}{m}}C(m),$$
then (\ref{con2}) holds true for every $v\in H_0^{1,2}(B_R)$ with $R\in (0,R_0]$. (\ref{vol1}) also implies
so called 'doubling property' that
\begin{equation}\label{vol2}
V(R)\leq 2^m\mathcal{D}(R_0)V(\f{R}{2})
\end{equation}
for all $R\in (0,R_0]$. In other words, we can choose $K_2=2^m\mathcal{D}(R_0)$ so that (\ref{double}) holds.

Denote by $\mu_2(R)$ the second eigenvalue of $\De v+\mu v=0$ in $B_R$, where the normal derivative of $v$ vanishes
on the boundary of $B_R$. Then $\mu_2(R)$ is obviously continuous in $R$. We claim
\begin{equation}\sup_{R\in (0,R_0]}R^{-2}\mu_2(R)^{-1}<+\infty.
\end{equation}
To prove it, it is sufficient to show
\begin{equation}\label{eig3}
\limsup_{R\ra 0^+}R^{-2}\mu_2(R)^{-1}<+\infty.
\end{equation}
Every minimal hypersurface in $\R^{m+1}$ can be view as a
minimal graph over $\R^m$ locally. More precisely, by choosing suitable coordinate, we can assume
$y_0=0$ and $T_{y_0}M$ is orthogonal to the $(m+1)$-th coordinate vector, and there is a sufficiently small number
$R_-\leq R_0$, such that
$$B_{R_-}=\{y=(z,f(z)):z\in \Om\}$$
with a star-like domain $\Om$ in $\R^m$ and a function $f:\Om\ra \R$ satisfying $f(0)=0$, $|Df|(0)=0$, $|f|<1$ and $|Df|<1$ on $\Om$. Thereby $B_{R_-}$ is diffeomorphic to
Euclidean disk of radius $R_{-}$, and the diffeomorphism can be given by
$$\chi: y=(z,f(z))\mapsto \f{|y|}{|z|}z.$$
Obviously $\chi$ maps $B_R$ to an $m$-dimensional Euclidean ball of radius $R$ for each $R\leq R_-$. Hence the canonical Neumann-Poincar\'{e}
inequality on Euclidean spaces implies
\begin{equation}
\int_{B_R}|v-\bar{v}_{R}|^2*1\leq C R^2\int_{B_R}|\n v|^2*1
\end{equation}
for each $R\leq R_-$ and every function $v$ on $B_R$ of $H^{1,2}$-type, where $C$ is a positive constant which depends on $R_-$, $m$
and $M$, but not depends on $v$. (\ref{eig3}) is immediately followed from it. Now we denote
\begin{equation}
\La(R_0):=\sup_{R\in (0,R_0]}R^{-2}\mu_2(R)^{-1},
\end{equation}
then (\ref{poin2}) holds true for each $R\leq R_0$ if we choose $K_3=\f{9}{16}\La(R_0)$.

The Gauss map $\g:M\ra S^m$ is defined by
\begin{equation}
\g(y)=T_y M\in S^m
\end{equation}
via the parallel translation in $\R^m$ for every $y\in M$. Ruh-Vilms \cite{r-v} proved that $M$ has parallel mean curvature vector if and only if
 $\g$ is a harmonic map. Suppose the image of $B_{R_0}$ under Gauss map is contained in $S^m\backslash \ol{S}_+^{m-1}$,
and
\begin{equation}
M(R_0):=\sup_{B_{R_0}}(r^{-1}\circ \g)<+\infty.
\end{equation}
Then the image shrinking property of harmonic maps (Theorem \ref{t1}) allows us to find two positive constants $C_2$ and $C_3$,
depending only on $m$, $\mathcal{D}(R_0)$ and $\La(R_0)$; if we denote
\begin{equation}\label{R1}
R_1=\f{1}{2}\exp\big(-C_3\exp(C_2M(R_0))\big)R_0,
\end{equation}
then there is $\th_0\in \R$, such that every $y\in B_{R_1}$ satisfies
\begin{equation}\label{h}
(\g(y),x_0)\geq \f{1}{2}M(R_1)^{-1} \qquad \text{with }x_0=(\cos\th_0,\sin\th_0,0,\cdots,0).
\end{equation}

Denote
\begin{equation}
f=(\cdot,x_0)\circ \g,
\end{equation}
then $f$ is a positive function on $B_{R_1}$.
By (\ref{hess})
\begin{equation}
\Hess (\cdot,x_0)=-(\cdot,x_0)g_s
\end{equation}
with $g_s$ the canonical metric on $S^m$. Using composition formula we can get
\begin{equation}\label{la2}\aligned
\De f&=\Hess (\cdot,x_0)(\g_* e_\a,\g_* e_\a)+d\g(\tau(\g))\\
     &=-f|d\g|^2\endaligned
\end{equation}
Denote by $B$ the second fundamental form of $M$ in $\R^{m+1}$; as shown in \cite{x} Chap. 3. \S 3.1, the energy density
of Gauss map
\begin{equation}\label{energy}
E(\g)=\f{1}{2}|d\g|^2=\f{1}{2}|B|^2.
\end{equation}
Substituting (\ref{energy}) into (\ref{la2}) yields
\begin{equation}\label{la3}
\De f=-|B|^2f.
\end{equation}
Let
\begin{equation}
h:=f^{-1}=(\cdot,x_0)^{-1}\circ \g,
\end{equation}
then from (\ref{la3}) we arrive at
\begin{equation}\label{la4}
\De h=|B|^2h+2h^{-1}|\n h|^2.
\end{equation}

The following Simons' identity \cite{si} is well-known
\begin{equation}\label{si}
\De |B|^2=-2|B|^4+2|\n B|^2.
\end{equation}
With the aid of Codazzi equations, Schoen-Simon-Yau \cite{s-s-y} get a Kato-type inequality as follows
\begin{equation}\label{kato}
|\n B|^2\geq \big(1+\f{2}{m}\big)\big|\n |B|\big|^2.
\end{equation}
And it follows from (\ref{si}) and (\ref{kato}) that
\begin{equation}\label{la5}
\De |B|^2\geq -2|B|^4+2\big(1+\f{2}{m}\big)\big|\n|B|\big|^2.
\end{equation}
Based on (\ref{la4}) and (\ref{la5}), $\De (|B|^ph^q)$ can be easily calculated for arbitrary $p,q>0$; by choosing
suitable $p,q$, one can
proceeded as in \cite{e-h} to get
\begin{equation}
\De(|B|^p h^p)\geq 0
\end{equation}
for arbitrary $p\geq \f{m-2}{2}$.
The mean value inequality on minimal submanifolds (see \cite{c-l-y}, \cite{n}) can be applied to get
\begin{equation}\label{cur1}
|B|^ph^p(y_0)\leq C(m)V(R)^{-\f{1}{2}}\Big(\int_{B_R}|B|^{2p}h^{2p}*1\Big)^{\f{1}{2}}
\end{equation}
for arbitrary $R\leq R_1$.
Again using the inequality for $\De (|B|^ph^q)$, one can get the following estimate as in \cite{e-h}:
\begin{equation}
\int_{B_{R_1}}|B|^{2p}h^{2p}\eta^{2p}*1\leq C(p)\int_{B_{R_1}}h^{2p}|\n \eta|^{2p}*1.
\end{equation}
Here $p\geq \max\{3,m-1\}$ and $\eta$ can be taken by any smooth function which vanishes outside a compact subset of $B_{R_1}$.
Now we choose $\eta$ to be standard cut-off function satisfying $\text{supp}\ \eta\subset B_{R_1}$, $\eta\equiv 1$ on $B_{\f{R_1}{2}}$
and $|\n \eta|\leq c_0R_1^{-1}$ and we get
\begin{equation}\label{cur2}
\int_{B_{\f{R_1}{2}}}|B|^{2p}h^{2p}*1\leq C(p)c_0^{2p}R_1^{-2p}V(R_1)\sup_{B_{R_1}}h^{2p}.
\end{equation}
Substituting (\ref{cur2}) into (\ref{cur1}) implies
\begin{equation}\label{cur3}
|B|^p h^p(y_0)\leq C(m,p)\Bigg(\f{V(R_1)}{V(\f{R_1}{2})}\Bigg)^{\f{1}{2}}R_1^{-p}\sup_{B_{R_1}}h^p.
\end{equation}
Then by combining with (\ref{cur3}), (\ref{vol2}), (\ref{R1}) and (\ref{h}) we obtain a prior curvature estimate
as follows:

\begin{thm}\label{t3}

Let $M^m\subset \R^{m+1}$ be an imbedded minimal hypersurface, $y_0$ be an arbitrary point in $M$. Denote
$$B_R=\{y\in M: |y-y_0|\}<R.$$
If there is $R_0>0$, such that the Gauss image of $B_{R_0}$ is contained in $S^m\backslash\ol{S}_+^{m-1}$,
and
$$\sup_{B_{R_0}}(r^{-1}\circ \g)<+\infty,$$
the we have the following estimate
\begin{equation}\label{cur4}
|B|(y_0)\leq C_4R_0^{-1}\exp\big(C_3\exp(C_2\sup_{B_{R_0}}(r^{-1}\circ \g))\big).
\end{equation}
Here $C_2,C_3,C_4$ are positive constants only depending on $m, \mathcal{D}(R_0)$ and $\La(R_0)$,
where $\mathcal{D}(R_0):=\f{V(R_0)}{\om_m R_0^m}$ and $\La(R_0):=\sup_{R\in (0,R_0]}R^{-2}\mu_2(R)^{-1}$.
\end{thm}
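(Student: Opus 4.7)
The overall plan is to treat the Gauss map $\g: M \to S^m$, which is harmonic by the Ruh--Vilms theorem since $M$ is minimal, and to bootstrap the image shrinking property (Theorem \ref{t1}) into a pointwise curvature estimate at $y_0$ via an Ecker--Huisken style subsolution argument. The two ingredients we must assemble are (i) verification that $M$ satisfies a local DSVP-condition at $y_0$ with constants controlled only by $m$, $\mathcal{D}(R_0)$ and $\La(R_0)$, and (ii) a suitable nonnegative function $F = |B|^p h^p$ (where $h$ is the reciprocal of a height function on the Gauss image) that is subharmonic on $M$, so that the mean value inequality on minimal submanifolds can be applied.

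First I would check local DSVP with respect to $y_0$. Combining the Michael--Simon Sobolev inequality with the monotonicity formula gives the Sobolev constant $K_1$ and doubling constant $K_2 = 2^m \mathcal{D}(R_0)$; the Neumann--Poincar\'{e} constant $K_3$ comes from the local graphical representation of $M$ near $y_0$, giving $K_3 = \frac{9}{16}\La(R_0)$. With these constants in hand, Theorem \ref{t1} applied to $u = \g$ yields $\th_0 \in \R$ and
\begin{equation*}
R_1 = \tfrac{1}{2}\exp\bigl(-C_3 \exp(C_2 M(R_0))\bigr) R_0
\end{equation*}
such that $(\g(y), x_0) \geq \tfrac{1}{2} M(R_1)^{-1}$ for all $y \in B_{R_1}$, where $x_0 = (\cos\th_0, \sin\th_0, 0, \dots, 0)$ and $C_2, C_3$ depend only on $m, \mathcal{D}(R_0), \La(R_0)$.

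Next, setting $f = (\cdot, x_0) \circ \g$ and $h = f^{-1}$, the Hessian formula (\ref{hess}) together with the composition formula and harmonicity of $\g$ gives $\De f = -|B|^2 f$, hence $\De h = |B|^2 h + 2 h^{-1} |\n h|^2$. Meanwhile Simons' identity (\ref{si}) combined with the Schoen--Simon--Yau Kato inequality (\ref{kato}) yields $\De |B|^2 \geq -2|B|^4 + 2(1 + 2/m)|\n|B||^2$. For $p \geq (m-2)/2$ one can then expand $\De(|B|^p h^q)$ and choose the exponents (the natural choice being $q = p$) so that the cross terms combine favorably with the Kato improvement, giving $\De(|B|^p h^p) \geq 0$ as a subsolution.

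The final step applies the mean value inequality for subharmonic functions on minimal submanifolds to $|B|^{2p} h^{2p}$, yielding the estimate (\ref{cur1}). A standard Caccioppoli-type argument on the subsolution inequality with a cut-off $\eta$ satisfying $\eta \equiv 1$ on $B_{R_1/2}$ and $|\n \eta| \leq c_0 R_1^{-1}$ produces (\ref{cur2}). Combining these with the doubling property (\ref{vol2}) to kill $V(R_1)/V(R_1/2)$ and the bound $\sup_{B_{R_1}} h \leq 2 M(R_1) \leq 2 M(R_0)$ obtained from image shrinking, then multiplying through by $h(y_0)^{-p} \leq (2 M(R_0))^{p}$ and substituting the explicit form of $R_1$, gives exactly (\ref{cur4}) after absorbing all $M(R_0)$-polynomial factors into the double exponential. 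The main obstacle will be the algebraic step of verifying that $F = |B|^p h^p$ is subharmonic for the appropriate range of $p$: the mixed terms involving $\n h$ and $\n |B|$ need careful handling, and one must confirm the $(1+2/m)$ factor from Kato is enough to absorb them when $p \geq \max\{3, m-1\}$ (which is the range needed for the Caccioppoli iteration as well).
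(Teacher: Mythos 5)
Your proposal follows the paper's proof essentially step for step: verifying the local DSVP constants $K_1, K_2, K_3$ from Michael--Simon, monotonicity, and the local graphical representation; invoking Ruh--Vilms and the image shrinking theorem to obtain $R_1$ and the lower bound on $(\g, x_0)$; the subharmonicity of $|B|^p h^p$ via $\De f = -|B|^2 f$, Simons' identity and the Schoen--Simon--Yau Kato inequality following Ecker--Huisken; and finally the mean value inequality plus Caccioppoli estimate combined with doubling and the $\sup h$ bound to produce (\ref{cur4}). One minor slip: the factor you call ``multiplying through by $h(y_0)^{-p}$'' actually satisfies $h(y_0)^{-p} = f(y_0)^p \leq 1$, so it can simply be dropped rather than bounded by $(2M(R_0))^p$; the extra $M(R_0)$ factor that does appear comes from $\sup_{B_{R_1}} h$, and as you note it is absorbed into the double exponential. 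Apart from this cosmetic point the argument matches the paper's.
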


\begin{rem}

If the Gauss image of $B_{R_0}$ is contained in $K\subset \Bbb{V}$, where $\Bbb{V}$ is a simply connected subset of
$S^m\backslash S^{m-2}$, then $\sup_{B_{R_0}}(r^{-1}\circ \g)<+\infty$. From the image shrinking property, we can proceed as above and get
\begin{equation}
|B|(y_0)\leq C_5R_0^{-1}
\end{equation}
with a positive constant $C_5$ depending only on $m, \mathcal{D}(R_0),\La(R_0)$ and $K$.

\end{rem}

Now we additionally assume $M$ to be complete and the Gauss image of $M$ is contained in
$S^m\backslash \ol{S}_+^{m-1}$. If there exists $y_1\in M$ and a positive constant $C$ such that
\begin{equation}
\mathcal{D}(y_1,R)\leq C\qquad \text{for every }R<+\infty,
\end{equation}
then we say $M$ has Euclidean volume growth. For arbitrary $y\in M$, if we denote $d=d(y,y_1)$, then
$$V(y,R)\leq V(y_1,R+d)\leq C\om_m(R+d)^m$$
and moreover
$$\mathcal{D}(y,R)\leq C\Big(\f{R+d}{R}\Big)^m$$
Letting $R\ra +\infty$ implies $\lim_{R\ra +\infty}\mathcal{D}(y,R)\leq C$, then monotonicity theorem tells us
\begin{equation}
\mathcal{D}(y,R)\leq C\qquad \text{for every }y\in M\text{ and }R<+\infty.
\end{equation}
Using image shrinking property and above curvature estimates, one can get a Bernstein-type theorem as follows.

\begin{thm}\label{t4}
Let $M^m\subset \R^{m+1}$ be an imbedded complete minimal hypersurface with Euclidean volume growth, and the image under Gauss
map is contained in $S^m\backslash \ol{S}_+^{m-1}$. Assume there is $y_0\in M$,
such that $\lim_{R\ra +\infty}\La(R)<+\infty$, and
\begin{equation}\label{order}
\sup_{B_R}(r^{-1}\circ \g)=o(\log\log R).
\end{equation}
Then $M$ has to be an affine linear subspace.

\end{thm}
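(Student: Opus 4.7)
The plan is to derive $|B|(y_0) = 0$ from the pointwise curvature estimate of Theorem \ref{t3} by sending the radius $R_0 \ra +\infty$, then to apply the identical mechanism at every point $y \in M$ to conclude $|B| \equiv 0$; since $M$ is complete, it will then be totally geodesic, hence an affine hyperplane.

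First, the constants $C_2, C_3, C_4$ appearing in (\ref{cur4}) must be controlled uniformly as $R_0 \ra +\infty$. By the volume-comparison argument given in the paragraph preceding the theorem, Euclidean volume growth gives $\mathcal{D}(y_0, R) \leq C$ for every $R$, and the monotonicity of $\La$ together with the hypothesis $\lim_{R \ra +\infty} \La(R) < +\infty$ gives $\sup_{R > 0} \La(R) < +\infty$. Since $C_2, C_3, C_4$ depend only on $m$, $\mathcal{D}(R_0)$ and $\La(R_0)$, they admit uniform bounds $\bar{C}_2, \bar{C}_3, \bar{C}_4$. Next comes the asymptotic analysis of the right-hand side of (\ref{cur4}): given $\ep > 0$, the hypothesis (\ref{order}) yields $\bar{C}_2 \sup_{B_R(y_0)} r^{-1} \circ \g < \ep \log\log R$ for all large $R$, so $\exp\bigl(\bar{C}_2 \sup_{B_R} r^{-1} \circ \g\bigr) < (\log R)^\ep$; choosing $\ep < 1/\bar{C}_3$ gives $\bar{C}_3 (\log R)^\ep = o(\log R)$, and hence $\exp\bigl(\bar{C}_3 (\log R)^\ep\bigr) = R^{o(1)}$. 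Plugging into (\ref{cur4}) with $R_0 = R$ yields $|B|(y_0) \leq \bar{C}_4 R^{-1+o(1)}$, and letting $R \ra +\infty$ forces $|B|(y_0) = 0$.

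To upgrade this to $|B| \equiv 0$ on all of $M$, the same estimate should be replayed with $y_0$ replaced by an arbitrary $y \in M$. The uniform volume bound $\mathcal{D}(y, R) \leq C$ is automatic from Euclidean volume growth, and for fixed $y$ the nesting $B_R(y) \subset B_{R + d(y, y_0)}(y_0)$ together with (\ref{order}) keeps $\sup_{B_R(y)} r^{-1} \circ \g$ at order $o(\log\log R)$. The only missing ingredient is a Neumann-Poincar\'{e} inequality on balls centered at $y$ with a constant independent of $R$; since NP is hypothesized only at $y_0$, this transfer is the main technical obstacle. I would address it via a chaining/telescoping argument using the doubling property at $y_0$, which should yield an NP constant on $B_R(y)$ depending on $d(y, y_0)$ but not on $R$. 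With that in hand, Theorem \ref{t3} applied at $y$ and the same asymptotic analysis give $|B|(y) = 0$ for every $y \in M$, so $M$ is totally geodesic, and by completeness and connectedness $M$ must be an affine linear subspace.
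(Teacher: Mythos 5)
Your strategy of sending $R_0\to\infty$ in the pointwise estimate (\ref{cur4}) does correctly give $|B|(y_0)=0$, and the asymptotic analysis (exponentiate twice, lose a power of $R$) is right. But the extension to $|B|\equiv 0$ has a genuine gap, and it is precisely the point you flag as ``the main technical obstacle'': the Neumann--Poincar\'e inequality is hypothesized only on balls centered at $y_0$, and the chaining/telescoping strategy you gesture at does not transfer it to balls centered at an arbitrary $y$. Doubling plus NP at a single center does not propagate NP to other centers; the standard Whitney-covering or telescoping arguments (\`a la Saloff-Coste) require a weak Poincar\'e inequality to hold on a whole family of balls along a chain, not just on concentric balls at one point. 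Moreover the balls here are extrinsic, not geodesically convex, so convexity-based chaining is also unavailable. As written, you cannot invoke Theorem \ref{t3} at a point $y\neq y_0$, since its constant $\La(R_0)$ is an NP constant centered at $y$.

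The paper's proof avoids this entirely by separating the two roles of the hypotheses. The NP hypothesis at $y_0$ is used only once, inside the image-shrinking step (Theorem \ref{t1}) applied at $y_0$: combined with (\ref{order}) and a compactness/diagonal argument over a sequence $R_j\to\infty$, it shows that the Gauss image of the \emph{entire} manifold $M$ lands in a single open hemisphere $\{(\cdot,x_0)>0\}$. After that, the pointwise curvature estimate at any $y\in M$ is derived from scratch --- the mean-value inequality (\ref{cur1}), the cut-off estimate (\ref{cur2}), and the combination (\ref{cur3}) --- and this chain uses only the mean-value property of minimal submanifolds and the doubling ratio $V(R)/V(R/2)$, never the NP inequality. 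Doubling at every $y$ is free from Euclidean volume growth via the monotonicity formula. So the paper never needs to transport NP; it turns the NP hypothesis at $y_0$ into a global Gauss-image confinement, which then feeds a purely local estimate that works at every point. You should restructure along these lines: prove hemisphere confinement first, then run the curvature estimate at arbitrary $y$ without Theorem \ref{t3}.

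Two smaller points. First, even in the paper the letting-$R\to\infty$ step at an arbitrary $y$ uses $\sup_{B_R(y)}(r^{-1}\circ\g)\le\sup_{B_{R+d}(y_0)}(r^{-1}\circ\g)=o(\log\log R)$, which is subexponential and hence killed by $R^{-1}$; you noticed this inclusion, so that part is fine. Second, note that the constants $C_2,C_3,C_4$ in Theorem \ref{t3} depend on $\mathcal{D}(R_0)$ and $\La(R_0)$, and your claim of uniform bounds $\bar C_2,\bar C_3,\bar C_4$ as $R_0\to\infty$ needs the monotonicity of $\mathcal{D}$ (giving $\lim_{R\to\infty}\mathcal{D}(R)\le C$) together with the hypothesis $\lim_{R\to\infty}\La(R)<+\infty$; you invoked these, so the $y_0$ case is sound.
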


\begin{proof}

Denote $\Th=\th\circ \g$ and $M(R)=\sup_{B_R}(r^{-1}\circ\g)$. Since $\g$ is harmonic, Theorem \ref{t1} enable us to find two positive constants
$C_0$ and $C_1$ depending only on $m$, $\lim_{R\ra +\infty}\mathcal{D}(R)$ and $\lim_{R\ra +\infty}\La(R)$; if we denote
$$R'=\f{1}{2}\exp\big(-C_1\exp(C_0M(R))\big)R,$$
then there is $\th_0=\th_0(R)\in [\f{\pi}{3},\f{5\pi}{3}]$, such that
\begin{equation}\label{th2}
\big|\Th(y)-\th_0(R)\big|\leq \f{\pi}{3}\qquad \text{for every }y\in B_{R'}.
\end{equation}

By the compactness of $[\f{\pi}{3},\f{5\pi}{3}]$, there is an monotonicity increasing sequence $\{R_j:j\in \Bbb{Z}^+\}$ satisfying
$\lim_{j\ra \infty}R_j=+\infty$ and $\lim_{j\ra \infty}\th(R_j)=\th_\infty\in [\f{\pi}{3},\f{5\pi}{3}]$.
Denote
$$R'_j=\f{1}{2}\exp\big(-C_1\exp(C_0M(R_j))\big)R_j.$$
(\ref{order}) implies for arbitrary $\ep>0$, there is $k\in \Bbb{Z}^+$, such that for every $j\geq k$, $M(R_j)\leq \ep\log\log R_j$, hence
$$R'_j\geq \f{R_j}{2\exp\big(C_1(\log R_j)^{C_0\ep}\big)}.$$
When $\ep$ is sufficiently small and $R_j$ is sufficiently large, one can have $C_1(\log R_j)^{C_0\ep}\leq \f{1}{2}\log R_j$, which implies
$$R'_j\geq \f{1}{2}R_j^{\f{1}{2}}$$
and hence $\lim_{j\ra \infty}R'_j=+\infty$.

Hence for arbitrary $y\in M$, we can find $l\in \Bbb{Z}^+$, such that $d(y_0,y)\leq R'_j$ whenever $j\geq l$. (\ref{th2}) tells us
$$\big|\Th(y)-\th_0(R_j)\big|\leq \f{\pi}{3}.$$
Letting $j\ra \infty$ in above inequality we arrive at
\begin{equation}
\big|\Th(y)-\th_\infty\big|\leq \f{\pi}{3}\qquad \text{for every }y\in M.
\end{equation}
It implies the Gauss image of $M$ is contained in an open hemisphere centered at
 $x_0:=(\cos\th_\infty,\sin\th_\infty,0,\cdots,0)$.

Let $h:=(\cdot,x_0)^{-1}\circ \g$, then for arbitrary $y\in M$, similarly to above we can arrive at the following estimate
\begin{equation*}\aligned
|B|^ph^p(y)&\leq C(m,p)\Bigg(\f{V(R)}{V(\f{R}{2})}\Bigg)^{\f{1}{2}}R^{-p}\sup_{B_R(y)}h^p\\
           &\leq c_3 R^{-p}\sup_{B_R(y)}(r^{-1}\circ \g)^p;
           \endaligned
\end{equation*}
i.e.
\begin{equation}
|B|(y)\leq c_3^{\f{1}{p}}R^{-1}\sup_{B_{R+d}}(r^{-1}\circ \g)
\end{equation}
with $d=d(y,y_0)$ and $c_3$ a positive constant depending only on $m,p$ and $\lim_{R\ra +\infty}\mathcal{D}(R)$.
Letting $R\ra +\infty$ forces $|B|(y)=0$. Therefore $M$ has to be flat.

\end{proof}

\begin{rem}

$\lim_{R\ra +\infty}\La(R)<+\infty$ is equivalent to say that Neumann-Poincar\'{e} inequality
\begin{equation}\label{poin}
\int_{B_R(y_0)}|v-\bar{v}_R|^2*1\leq CR^2\int_{B_R(y_0)}|\n v|^2*1
\end{equation}
holds for every $R$ and arbitrary $C^1$-function $v$ on $B_R(y_0)$ with positive constant $C$.
Please note that Bernstein type theorem in \cite{jxy} requires (\ref{poin}) holds for every $y_0\in M$ with a uniform constant $C$,
so our assumption on $M$ is weaker.

Moreover, in \cite{jxy}, one assume that the Gauss image of $M$ omits a neighborhood of $\ol{S}_+^{n-1}$, which implies $r^{-1}\circ \g$ is
bounded and $\th\circ \g$ is contained in a closed interval $\subset(0,2\pi)$. In contrast, here $\th\circ\g$ is allowed to converge to $0$ or $2\pi$
when $y$ diverges to infinity at arbitrary speed and meanwhile $r^{-1}\circ g$ can increase to $+\infty$ in a controlled manner.
It is an improvement.

\end{rem}

\begin{rem}

In Theorem \ref{t4}, if we replace the condition on Gauss image of $M$ by assuming $\g(M)\subset K\subset \Bbb{V}$ with
$\Bbb{V}$ a simply connected subset of $S^m\backslash S^{m-2}$. Then again based on image shrinking property
we can get the corresponding Bernstein type result. We note that it is a generalization of Bernstein type theorem in \cite{jxy}.

\end{rem}

Especially if $M$ is area-minimizing, the Neumann-Poincar\'{e} inequality (\ref{poin}) holds for every $y_0\in M$ and $R$ with a uniform
constant $C$ only depending on $m$; the result is due to Bombieri-Giusti \cite{b-g}. Meanwhile, for volume of extrinsic balls we have (see \cite{b-g})
\begin{equation}
\text{Vol}(B_R(y))\leq \f{m+1}{2}\om_{m+1}R^m\qquad \text{for every }y\in M.
\end{equation}
Therefore $M$ satisfies local DSVP-condition with respect to arbitrary $y\in M$ with constants $K_1,K_2,K_3$ and $\nu$ which all depend only on $m$
and furthermore (\ref{sh2}) holds with positive constants $C_0$ and $c_1$ only depending on $m$. Starting from (\ref{sh2}) one can derive
another Bernstein type theorem as follows.

\begin{thm}\label{t5}

Let $M^m\subset \R^{m+1}$ be a complete imbedded area-minimizing hypersurface. There is a positive constant
$\ep=\ep(m)$, if the Gauss image of $M$ is contained in $S^m\backslash \ol{S}_+^{m-1}$,
and
\begin{equation}\label{growth}
\sup_{B_R(y_0)}(r^{-1}\circ \g)\leq \ep(m)\log\log R
\end{equation}
for a point $y_0\in M$ and every $R\geq R_->0$, then $M$ has to be an affine linear space.

\end{thm}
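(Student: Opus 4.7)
The plan is to exploit the oscillation estimate (\ref{sh2}) --- whose constants are uniform in $m$ for area-minimizing $M$ --- together with the slow growth hypothesis on $r^{-1}\circ\g$ to force $\Th:=\th\circ\g$ to be constant on $M$, and then to invoke the curvature estimates of Theorem \ref{t3} at every point of $M$. As recalled in the paragraph preceding the statement, the area-minimizing assumption yields, via the Michael-Simon Sobolev inequality, Bombieri-Giusti's Neumann-Poincar\'{e} inequality and the monotonicity formula, the local DSVP-condition at every point of $M$ with constants $\nu,K_1,K_2,K_3$ depending only on $m$. Consequently the constants $C_0=C_0(m)$ and $c_1=c_1(m)$ in (\ref{sh2}) depend only on $m$. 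Since $\g(M)\subset S^m\setminus\ol{S}_+^{m-1}$, one may choose $\th$ to be $(0,2\pi)$-valued as in Section \ref{s1}, so $\Th$ is globally defined on $M$ with $\text{osc}_{B_{R_0}}\Th\leq 2\pi$ for every $R_0>0$.

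I then set $\ep(m):=1/(2C_0(m))$ so that $\alpha:=C_0\ep<1$. For $s\geq R_-$ the growth hypothesis (\ref{growth}) gives $\exp(-C_0M(s))\geq(\log s)^{-\alpha}$, hence for any fixed $R>0$ and sufficiently large $R_0$,
\begin{equation*}
\int_{4R}^{2R_0} s^{-1}\exp(-C_0M(s))\,ds\geq\int_{\max(4R,R_-)}^{2R_0}s^{-1}(\log s)^{-\alpha}\,ds=\f{(\log 2R_0)^{1-\alpha}-(\log\max(4R,R_-))^{1-\alpha}}{1-\alpha},
\end{equation*}
which tends to $+\infty$ as $R_0\to\infty$ because $1-\alpha>0$. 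Plugging this into (\ref{sh2}) and using $\log\text{osc}_{B_{R_0}}\Th\leq\log(2\pi)$, I let $R_0\to\infty$: the right-hand side of (\ref{sh2}) diverges to $-\infty$ while its left-hand side does not depend on $R_0$, forcing $\text{osc}_{B_R}\Th=0$. Since $R>0$ was arbitrary, $\Th\equiv\th_\infty$ on $M$ for some constant $\th_\infty\in(0,2\pi)$.

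With $x_0:=(\cos\th_\infty,\sin\th_\infty,0,\dots,0)$, the constancy of $\Th$ yields $(\g(y),x_0)=r\circ\g(y)>0$ for every $y\in M$, so $\g(M)$ lies in the open hemisphere centered at $x_0$ and $h:=(\cdot,x_0)^{-1}\circ\g$ coincides with $r^{-1}\circ\g$. Repeating the Simons--Schoen-Simon-Yau--Ecker-Huisken argument carried out in the proof of Theorem \ref{t3} --- now applied at an arbitrary $y\in M$ rather than only at $y_0$, and using that the mean-value inequality and Sobolev constants for area-minimizing $M$ depend only on $m$ --- gives, for every $p\geq\max\{3,m-1\}$ and every $R>0$,
\begin{equation*}
|B|(y)\leq c(m,p)R^{-1}\sup_{B_R(y)}h\leq c(m,p)R^{-1}\sup_{B_{R+d(y,y_0)}(y_0)}(r^{-1}\circ\g)\leq c(m,p)\ep\,R^{-1}\log\log(R+d(y,y_0)),
\end{equation*}
where I used $B_R(y)\subset B_{R+d(y,y_0)}(y_0)$ and (\ref{growth}) in the last step. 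Letting $R\to\infty$ with $y$ fixed gives $|B|(y)=0$, so $M$ is totally geodesic and hence an affine linear subspace.

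The main obstacle I foresee is verifying cleanly that the constants $C_0$ and $c_1$ in (\ref{sh2}) depend only on $m$ for area-minimizing $M$; this amounts to tracing the derivation in Section \ref{s2} and checking that the Michael-Simon Sobolev constant, the doubling constant coming from the monotonicity formula, and the Bombieri-Giusti Neumann-Poincar\'{e} constant all enter (\ref{sh2}) in a $y_0$-independent way. A secondary piece of book-keeping is transporting the curvature estimate from $y_0$ to an arbitrary $y$: the growth hypothesis is posed only at $y_0$, so on $B_R(y)$ one has to bound $r^{-1}\circ\g$ via $B_{R+d(y,y_0)}(y_0)$, but this substitution costs nothing in the limit $R\to\infty$.
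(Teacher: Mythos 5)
Your proposal is correct and follows essentially the same route as the paper: for area-minimizing $M$ the DSVP constants (hence $C_0, c_1$ in (\ref{sh2})) depend only on $m$, the $\log\log$ growth hypothesis makes the integral on the right of (\ref{sh2}) diverge, and the resulting control on $\Th=\th\circ\g$ places the Gauss image inside a hemisphere, after which the Ecker-Huisken--type curvature estimate forces $|B|\equiv 0$. The only differences are cosmetic: you push $R_0\to\infty$ in (\ref{sh2}) directly to conclude that $\Th$ is exactly constant (and take $\ep=1/(2C_0)$ so the integrand is $(\log s)^{-1/2}$), whereas the paper takes $\ep=1/C_0$, gets $\mathrm{osc}_{B_R}\Th\leq 2\pi/3$ for every $R$, and then extracts $\th_\infty$ by a compactness argument before invoking Ecker-Huisken; both variants close the argument.
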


\begin{proof}

The denotation of $C_0$ and $c_1$ is same as above. Let
\begin{equation}\ep=C_0^{-1},
\end{equation}
then
\begin{equation}\label{sh5}
\aligned
\int_{R_-}^{+\infty} R^{-1}\exp(-C_0M(R))dR&\geq \int_{R_-}^{+\infty} R^{-1}(\log R)^{-C_0\ep}dR\\
                                     &=\int_{R_-}^{+\infty} (\log R)^{-1}d\log R\\
                                     &=\log\log R|_{R_-}^{+\infty}=+\infty.
\endaligned
\end{equation}
By (\ref{sh2}) and (\ref{sh5}), for arbitrary $R\geq R_-$, one can take $R_0$ large enough, such that $\log \text{osc}_{B_R}\Th-\log\text{osc}_{B_{R_0}}\Th\leq -\log 3$;
in conjunction with $\text{osc}_{B_{R_0}}\Th\leq 2\pi$, we can find $\th_0(R)\in [\f{\pi}{3},\f{5\pi}{3}]$, such that
$$\Th|_{B_R}\in [\th_0(R)-\f{\pi}{3},\th_0(R)+\f{\pi}{3}].$$
The compactness of $[\f{\pi}{3},\f{5\pi}{3}]$ enable us to find a strictly increasing sequence $\{R_j:j\in \Bbb{Z}^+\}$ converging to $+\infty$
and satisfying $\lim_{j\ra \infty}\th_0(R_j)=\th_\infty\in [\f{\pi}{3},\f{5\pi}{3}]$. Similarly to above we can derive
$|\Th(y)-\th_\infty|\leq \f{\pi}{3}$ for every $y\in M$. It follows that the Gauss image of $M$ is contained in a closed subset
of open hemisphere. Finally Ecker-Huisken estimate \cite{e-h} implies $M$ has to be affine linear.

\end{proof}

\begin{rem}

Similarly, if the Gauss image of $M$ is contained in $K\subset \Bbb{V}$ with $\Bbb{V}$ a simply connected subset of
$S^m\backslash S^{m-2}$, then our conclusion still holds true. Thereby we not only improve, but also generalize the results of
Theorem 6.6 in \cite{jxy}.

\end{rem}

\begin{cor}\label{cor2}

Let $f$ be an entire solution of the minimal surface equation
\begin{equation}
\sum_{i=1}^m D^i\Big(\f{D^i f}{\sqrt{1+|D f|^2}}\Big)=0
\end{equation}
in $\R^m$ with $f(0)=0$.
There is a positive constant $\de=\de(m)$, if
\begin{equation}\label{graph2}
\Big(\sum_{i=1}^{m-1}(D^i f)^2\Big)^{\f{1}{2}}\leq \de(m)\log\log \big(f^2+x^2\big)^{\f{1}{2}}
\end{equation}
holds for arbitrary $|x|\geq R_->0$, then $f$ has be to affine linear.

\end{cor}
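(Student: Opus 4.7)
The plan is to realize this as a direct consequence of Theorem \ref{t5}. Let $M = \{(x, f(x)) : x\in \R^m\}\subset \R^{m+1}$ denote the graph of $f$; then $M$ is a complete embedded minimal hypersurface passing through $y_0 := 0$, and by the classical calibration argument (using the closed $m$-form associated with the graphing direction) every entire minimal graph is area-minimizing, so Theorem \ref{t5} is applicable in principle. The real work is to choose coordinates in $\R^{m+1}$ so that both hypotheses of Theorem \ref{t5} are satisfied: namely (i) $\g(M)\subset S^m\setminus\ol{S}_+^{m-1}$, and (ii) $\sup_{B_R(y_0)}(r^{-1}\circ\g)\leq \ep(m)\log\log R$ for all sufficiently large $R$.

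Writing a generic point of $\R^{m+1}$ as $(z_1,\ldots,z_{m+1})$, the upward unit normal to the graph at $(x,f(x))$ is $n = (-f_1,-f_2,\ldots,-f_m,1)/\sqrt{1+|Df|^2}$. I would permute coordinates via $(y_1,y_2,y_3,\ldots,y_{m+1}) := (z_m,z_{m+1},z_1,z_2,\ldots,z_{m-1})$, so that in the new coordinates
$$\g(x) = \f{1}{\sqrt{1+|Df|^2}}\bigl(-f_m,\,1,\,-f_1,\ldots,-f_{m-1}\bigr).$$
The key observation is that the second new coordinate of $\g$ equals $1/\sqrt{1+|Df|^2} > 0$, so $\g(M)\subset \{y_2 > 0\}\subset S^m\setminus\ol{S}_+^{m-1}$, verifying (i). A direct computation then gives
$$r\circ\g = \sqrt{(y_1\circ\g)^2 + (y_2\circ\g)^2} = \f{\sqrt{1+f_m^2}}{\sqrt{1+|Df|^2}},$$
whence $r^{-1}\circ\g = \sqrt{1 + (\sum_{i<m}f_i^2)/(1+f_m^2)} \leq \sqrt{1 + \sum_{i=1}^{m-1}f_i^2}$.

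For condition (ii), I note that for $y=(x,f(x))\in B_R(y_0)$ the extrinsic bound $|x|^2+f(x)^2\leq R^2$ forces $(f^2+|x|^2)^{1/2}\leq R$. Applying the hypothesis (\ref{graph2}) gives $\sum_{i<m}f_i(x)^2 \leq \de^2(\log\log R)^2$ whenever $|x|\geq R_-$ (assuming $R_-\geq e$ without loss of generality so that the $\log\log$ is well-defined and nonnegative), while on the compact set $\{|x|\leq R_-\}$ the same sum is bounded by a constant $C$ depending only on $f|_{\ol{B_{R_-}(0)}}$. Combining these with the elementary bound $\sqrt{1+a^2}\leq 1+a$, we obtain for all sufficiently large $R$
$$\sup_{B_R(y_0)}(r^{-1}\circ\g) \leq 1 + \max\bigl(\sqrt{C},\,\de\log\log R\bigr) \leq \ep(m)\log\log R,$$
provided $\de = \de(m)$ is chosen strictly less than $\ep(m)$, e.g.\ $\de = \ep(m)/2$.

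Both hypotheses of Theorem \ref{t5} being met, $M$ is an affine linear $m$-plane in $\R^{m+1}$, which forces $f$ itself to be affine linear. The only real subtlety is the choice of coordinate permutation: putting the graph-direction coordinate $z_{m+1}$ into the $y_2$-slot is what simultaneously keeps $\g(M)$ in a single open hemisphere with respect to $\{y_2 = 0\}$ and makes $r^{-1}\circ\g$ controlled precisely by the $m-1$ partial derivatives $f_1,\ldots,f_{m-1}$ that appear on the left side of (\ref{graph2}); any other identification would either fail to exclude $\ol{S}_+^{m-1}$ from the Gauss image or produce a quantity on the right side different from what the hypothesis controls.
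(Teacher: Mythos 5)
Your proposal is correct and follows essentially the same route as the paper: realize the graph as an area-minimizing hypersurface through the origin, arrange (by relabeling the coordinates, which the paper does by redefining $\pi$ to project onto $(x_m,x_{m+1})$) that the Gauss image sits inside $S^m\setminus\ol{S}_+^{m-1}$ with $r^{-2}\circ\g = 1 + \bigl(\sum_{i<m}(D^i f)^2\bigr)/(1+(D^m f)^2) \le 1 + \sum_{i<m}(D^i f)^2$, translate the extrinsic ball condition $|y-y_0|<R$ into $(f^2+|x|^2)^{1/2}<R$, and invoke Theorem \ref{t5} with $\de=\ep(m)/2$. Your explicit treatment of the compact region $\{|x|\le R_-\}$ and the remark $\sqrt{1+a^2}\le 1+a$ merely spell out what the paper leaves implicit in ``when $R$ is large enough.''
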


\begin{proof}

Under the assumptions, $M=\{(x,f(x)):x\in \R^n\}$ is an entire minimal graph, which is area-minimizing from classical minimal surface theory.
For every $x\in \R^n$,
$$\g(x,f(x))=(1+|Df|^2)^{-\f{1}{2}}(-D^1 f,-D^2 f,\cdots,-D^m f,1).$$
Thus the Gauss image of $M$ is contained in an open hemisphere. Now we define $\pi: S^m\ra \overline{\Bbb{D}}$ by
$$(x_1,\cdots,x_{m+1})\ra (x_m,x_{m+1}).$$
Then
\begin{equation}\label{graph}
r^{-2}\circ \g=\f{1+|Df|^2}{1+(D^m f)^2}=1+\f{\sum_{i=1}^{m-1}(D^i f)^2}{1+(D^m f)^2}\leq 1+\sum_{i=1}^{m-1}(D^i f)^2.
\end{equation}
$f(0)=0$ implies $0\in M$; take $y_0=0$, then for arbitrary $y=(x,f(x))\in M$, $y\in B_R(y_0)$ if and only if $\big(f^2+x^2\big)^{\f{1}{2}}<R$.
Now we choose $\de(m)=\f{1}{2}\ep(m)$, where the definition of $\ep(m)$ is same as in Theorem \ref{t5}, then (\ref{graph}) and (\ref{graph2}) imply
(\ref{growth}) when $R$ is large enough. And our conclusion is immediately followed from Theorem \ref{t5}.

\end{proof}

\begin{rem}

By Corollary \ref{cor2}, if $D^1 f,\cdots,D^{m-1} f$ are uniformly bounded in $\R^m$, then $f$ is affine linear. So Corollary \ref{cor2} is an improvement
of Theorem 8 in \cite{b-g}. It is also comparable with Ecker-Huisken's results (see \cite{e-h}).

\end{rem}
\bigskip

\Section{Appendix}{Appendix}

As shown in \cite{G}, any harmonic map from a compact Riemannian manifold into a convex supporting set $\Bbb{V}$
has to be constant. Especially, arbitrary closed geodesic can be viewed as a harmonic map from $S^1$ into $\Bbb{V}$,
hence every convex supporting set cannot contain any closed geodesic.

Conversely, if a subset $\Bbb{V}$ of a Riemannian manifold $(M,g)$ contains no closed geodesic, does $\Bbb{V}$
have to be convex supporting? Unfortunately the answer is 'no'. The following is a counterexample. Let
$M=S^2$, $S^1$ be the equator and
\begin{equation*}
A:=\big\{(\cos\th,\sin\th,0)\in S^1:\th\in [0,\f{\pi}{3}]\cup [\f{2\pi}{3},\pi]\cup [\f{4\pi}{3},\f{5\pi}{3}]\big\}.
\end{equation*}
Noting that every great circle $\mc{C}$ intersects $S^1$ at least 2 antipodal points, we have $\mc{C}\cap A\neq \emptyset$
and hence $\Bbb{V}:=S^2\backslash A$ contains no closed geodesic. On the other hand, we show $\Bbb{V}$ is not a convex supporting set.
Let $R_{\f{\pi}{3}}$ be $\f{\pi}{3}$-rotation around $z$-axis. Choose compact subset $K$, such that $N,S\in K$ and $R_{\f{\pi}{3}}(K)=K$
(if not, replace $K$ instead of $K\cup R_{\f{\pi}{3}}(K)\cup R_{\f{2\pi}{3}}(K)$). Assume $f$ is a $C^2$-convex function
on $K$, then $f\circ R_{\f{\pi}{3}}$ and $f\circ R_{\f{2\pi}{3}}$ are also convex, hence
\begin{equation*}
h:=f+f\circ R_{\f{\pi}{3}}+f\circ R_{\f{2\pi}{3}}
\end{equation*}
is a convex function on $K$ which is invariant under $R_{\f{\pi}{3}}$. Since $N$ and $S$ are fixed points
under $R_{\f{\pi}{3}}$, we have
$$\big(R_{\f{\pi}{3}}\big)_*(\n h)=\n h$$
at $N$ and $S$. Thus $\n h(N)=\n h(S)=0$, which means $h$ has 2 critical points in $K$.
It contradicts to the convexity of $h$. Therefore $\Bbb{V}$ cannot be convex supporting.

Moreover, $S^n\backslash \ol{S}_+^{n-1}$ is the unique maximal convex supporting set of $S^n$ that contains
the upper hemisphere and lower hemisphere.

\begin{pro}

Let $\Bbb{V}$ be an open and connected convex supporting set of $S^n$, if $S_+^n\cup S_-^n\subset \Bbb{V}$, then
$\Bbb{V}\subset S^n\backslash \overline{S}_+^{n-1}$.

\end{pro}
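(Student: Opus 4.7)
I would argue by contradiction, reading the conclusion as the assertion that $\Bbb{V}$ avoids at least one of the two closed halves of the equator $E := \{x \in S^n : x_2 = 0\}$ (the choice being determined by $\Bbb{V}$; the reflection $\sigma : x_1 \mapsto -x_1$ is an isometry of $S^n$ fixing both $S^n_+$ and $S^n_-$ individually and exchanging $\overline{S}^{n-1}_+$ with $\overline{S}^{n-1}_-$, so the two assertions $\Bbb{V} \subset S^n \setminus \overline{S}^{n-1}_\pm$ are symmetric, and the statement is interpreted up to this reflection). Accordingly, I suppose instead that $\Bbb{V}$ meets both halves, at points $p \in \Bbb{V} \cap \overline{S}^{n-1}_+$ and $q \in \Bbb{V} \cap \overline{S}^{n-1}_-$, and seek a contradiction. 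Write $U = \Bbb{V} \cap E$.

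The first step is the following geometric observation. For any $r \in E$, the great circle of $S^n$ parametrized by $\gamma(t) = \cos t \cdot r + \sin t \cdot \epsilon_2$ (lying in the $2$-plane $\mathrm{span}(r, \epsilon_2)$) has $x_2$-coordinate $\sin t$, so $\gamma$ is contained in $S^n_+ \cup S^n_- \cup \{r, -r\}$. Hence if both $r$ and $-r$ belonged to $\Bbb{V}$, this closed geodesic of $S^n$ would lie inside $\Bbb{V}$, contradicting the well-known fact, recalled at the start of the appendix, that convex supporting sets admit no closed geodesic. So $U$ contains no antipodal pair; in particular $-p \notin \Bbb{V}$, which forces $q \neq -p$.

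Using path-connectedness and openness of $\Bbb{V}$, together with $S^n_+ \cup S^n_- \subset \Bbb{V}$, I can construct a piecewise smooth loop $\Gamma : S^1 \to \Bbb{V}$ based at $p$, consisting of a smooth arc from $p$ to $q$ entirely inside $S^n_+ \cup \{p, q\}$, followed by a smooth arc from $q$ back to $p$ inside $S^n_- \cup \{p, q\}$. The projection $\pi \circ \Gamma \subset \overline{\Bbb{D}}$ starts at $(p_1, 0)$ with $p_1 \geq 0$, enters the upper half-disk, reaches $(q_1, 0)$ with $q_1 \leq 0$, enters the lower half-disk, and returns. Such a planar loop has winding number $\pm 1$ about the origin, so $\Gamma$ wraps nontrivially around the subsphere $S^{n-2} = \pi^{-1}(0,0)$; after perturbing $\Gamma$ slightly away from $S^{n-2}$ if necessary, it represents a nontrivial element of $\pi_1(\Bbb{V})$ via the inclusion $\Bbb{V} \hookrightarrow S^n \setminus S^{n-2}$, whose fundamental group is $\Bbb{Z}$.

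The final and most delicate step is to upgrade this nontrivial free homotopy class of loops in $\Bbb{V}$ into an actual closed geodesic of $S^n$ contained in $\Bbb{V}$, which together with Step 1 yields the desired contradiction. My plan is a Birkhoff-style length-minimization: minimize arc length within the free homotopy class $[\Gamma]$ inside $\Bbb{V}$. The infimum is strictly positive because every representative of $[\Gamma]$ wraps around $S^{n-2}$, and its length is bounded below by twice the minimum distance from the loop to $S^{n-2}$. To show the infimum is actually attained by a smooth closed geodesic inside $\Bbb{V}$, one must prevent minimizing sequences from escaping to the boundary of $\Bbb{V}$; this is where the convex-supporting hypothesis must be used in its full strength, beyond its no-closed-geodesic consequence alone (the appendix counterexample shows the latter criterion is strictly weaker than convex supporting). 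Concretely, the strictly convex functions on compact subsets of $\Bbb{V}$ should provide the needed relative compactness, forcing a smooth geodesic limit inside $\Bbb{V}$. This last step is the principal obstacle in carrying out the proof.
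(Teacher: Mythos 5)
Your approach is genuinely different from the paper's, and it contains a real gap — one you honestly flag yourself. The paper does not attempt to produce a closed geodesic inside $\Bbb{V}$ at all. Instead it exhausts $\Bbb{V}$ by compact sets $K_i$ invariant under the reflection $\psi$ fixing the equator and closed under the meridian geodesics from $e_1$ to $-e_1$, symmetrizes a strictly convex function $f_i$ on each $K_i$ to $h_i=\tfrac12(f_i+f_i\circ\psi)$, notes $\n h_i(e_1)\neq 0$ (else $h_i$ would have critical points at both $\pm e_1$, impossible for a strictly convex function along a meridian in $K_i$), shows the open half-equator $H_i$ in the direction of $\n h_i(e_1)$ is disjoint from $K_i$ (strict monotonicity of $h_i$ along the meridian through any $x\in H_i$ would contradict $h_i(e_1)=h_i(-e_1)$), and passes to a convergent subsequence of the unit gradient directions $v_i$ to obtain a single limiting half-equator disjoint from all of $\Bbb{V}$. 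The whole argument lives entirely inside compact subsets and never faces the escape-to-the-boundary problem.

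The gap in your final step is not cosmetic. Birkhoff curve-shortening within the free homotopy class would produce a closed geodesic in the compact set $\overline{\Bbb{V}}$, but to get a contradiction you need the geodesic inside some compact $K\subset\Bbb{V}$; an open convex-supporting set offers no a priori compactness keeping minimizing sequences away from $\partial\Bbb{V}$, and the appendix counterexample underscores that the no-closed-geodesic criterion, which is all your argument would use, is strictly weaker than convex supporting. Two further issues weaken Step 3: (i) the loop $\Gamma$ is shown nontrivial in $\pi_1(S^n\setminus S^{n-2})$, but this transfers to $\pi_1(\Bbb{V})$ only if $\Bbb{V}\subset S^n\setminus S^{n-2}$, which is part of what you are trying to prove, not a hypothesis; (ii) even granting nontriviality, your lower bound for lengths (twice the distance to $S^{n-2}$) is not bounded away from zero unless you already control the distance from $\overline{\Bbb{V}}$ to $S^{n-2}$. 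What your Step 1 observation buys — and it is a genuinely nice remark absent from the paper — is that $\Bbb{V}\cap E$ contains no antipodal pair; but that alone does not rescue the minimization. The paper's quantitative use of strict convexity, rather than its qualitative no-closed-geodesic consequence, is what makes the argument close.
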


\begin{proof}

Let $\{e_1,\cdots,e_{n+1}\}$ be an orthornormal basis of $\R^{n+1}$, and
$$S_+^n=\{x\in S^n:(x,e_1)>0\},\qquad S_-^n=\{x\in S^n:(x,e_1)<0\}.$$
By the definition of convex supporting sets, for arbitrary compact subset $K\subset \Bbb{V}$, we can find a strictly convex function
$f$ on $K$. Now we choose a family of compact sets $\{K_i\subset V:i=1,2,\cdots\}$,
such that $K_i\subset K_j$ for arbitrary $i<j$, $\Bbb{V}=\bigcup_{i=1}^\infty K_i$, and
each $K_i$ satisfies the following 2 conditions: (I) $K_i$ is invariant under the reflection
with respect to the hyperplane $(\cdot,e_1)=0$; (II) For arbitrary $x\in K_i$ satisfying $(x,e_1)=0$, the geodesic from $e_1$
to $-e_1$ which goes through $x$ is contained in $K_i$. We denote by $f_i$ the convex function on $K_i$.

Now we denote by $\psi$ the reflection with respect to $(\cdot,e_1)=0$, then obviously $\psi$ is an isometry and
hence $f\circ \psi$ is also strictly convex. Let
$$h_i=\f{1}{2}(f_i+f_i\circ \psi),$$
then $h_i$ is a strictly convex function which is invariant under $\psi$, in particular $h_i(e_1)=h_i(-e_1)$.

If $\n h_i=0$ at $e_1$,
then $h_i\circ \psi=h_i$ implies $\n h_i=0$ at $-e_1$; it means that $h_i$ has 2 critical points in $K$, which contradict to the convexity
of $h$. Hence $\n h_i\neq 0$. Denote $v_i=\f{\n h_i}{|\n h_i|}$. Now we claim
$$H_i=\{x\in S^n: (x,e_1)=0,(x,v_i)>0\}$$
satisfies $H_i\cap K_i=\emptyset$. We prove it by Reductio ad absurdum. Assume
$x\in H_i\cap K_i$, then by Condition (II) there is a geodesic $\g$ lying in $K$ which connects $e_1$ and $-e_1$
and goes through $x\in K_i\cap H_i$; hence $\lan \dot{\g},v_i\ran>0$ and moreover $\f{d}{dt}\Big|_{t=0}(h_i\circ \g)>0$; the convexity of $h_i$
tells us $h_i\circ \g$ is a strictly increasing function, which contradict to $h_i(e_1)=h_i(-e_1)$.

The compactness of $T_{e_1}S^n$ enable us to find a subsequence of $\{v_i:i=1,2,\cdots\}$ converging to a unit vector in
$T_{e_1}S^n$. Without loss of generality one can assume
$$e_2=\lim_{i\ra \infty}v_i.$$
Denote
$$S_+^{n-1}=\{x\in S^n: (x,e_1)=0,(x,e_2)>0\}.$$
Then for every $x\in S_+^{n-1}$, we can find $k\in \Bbb{Z}^+$, such that $x\in H_i$ for every $i\geq k$. Therefore
$x\notin K_i$ and furthermore $x\notin \bigcup_{i=k}^\infty K_i=\Bbb{V}$. It follows $S_+^n\cap \Bbb{V}=\emptyset$, and the conclusion
immediately follows.

\end{proof}

\bibliographystyle{amsplain}

\end{document}